\newcommand\numberthis{\addtocounter{equation}{1}\tag{\theequation}}
\newtheorem{thm}{Theorem}
\newtheorem{prop}{Proposition}
\newtheorem{lemma}{Lemma}
\newtheorem{cor}{Corollary}
\theoremstyle{definition}
\theoremstyle{remark}
\newtheorem{remark}{Remark}
\newtheorem{example}{Example}
    \def\HSt{%
       \setbox0=\hbox{$\widehat{\mathit{HS}}$}
       \setbox1=\hbox{$\mathit{HS}$}
       \dimen0=1.1\ht0
       \advance\dimen0 by 1.17\ht1
       \smash{\mskip2mu\raise\dimen0\rlap{%
          \begin{turn}{180}
              {$\widehat{\phantom{\mathit{HS}}}$}
           \end{turn}} \mskip-2mu    
                \mathit{HS}
    }{\vphantom{\widehat{\mathit{HS}}}}{}}
    \def\HMt{%
       \setbox0=\hbox{$\widehat{\mathit{HM}}$}
       \setbox1=\hbox{$\mathit{HM}$}
       \dimen0=1.1\ht0
       \advance\dimen0 by 1.17\ht1
       \smash{\mskip2mu\raise\dimen0\rlap{%
          \begin{turn}{180}
              {$\widehat{\phantom{\mathit{HM}}}$}
           \end{turn}} \mskip-2mu    
                \mathit{HM}
    }{\vphantom{\widehat{\mathit{HM}}}}{}}
\newcommand{\HMf}{\widehat{\mathit{HM}}}
\newcommand{\spin}{\mathfrak{s}}
\newcommand{\Solv}{\mathsf{Solv}}
\newcommand{\X}{\mathcal{X}}
\newcommand{\Y}{\mathcal{Y}}
\newcommand{\Z}{\mathcal{Z}}
\begin{document}
\title{Monopole Floer homology and SOLV geometry}
\author{Francesco Lin}
\begin{abstract}We study the monopole Floer homology of a $\Solv$ rational homology sphere $Y$ from the point of view of spectral theory. Applying ideas of Fourier analysis on solvable groups, we show that for suitable $\Solv$ metrics on $Y$, small regular perturbations of the Seiberg-Witten equations do not admit irreducible solutions; in particular, this provides a geometric proof that $Y$ is an $L$-space. \end{abstract}
\maketitle

Among the three-dimensional model geometries, $\Solv$, i.e. $\mathbb{R}^3$ equipped with the metric $e^{2z}dx^2+e^{-2z}dy^2+dz^2$,
is the least symmetric one \cite{Sco}. This makes $\Solv$-manifolds (i.e. compact $3$-manifolds admitting a $\Solv$ metric) a very special class within the classification scheme of Thurston's geometrization theorem; if fact, they can be characterized as the geometric manifolds which are neither Seifert nor hyperbolic. From a historical perspective, their importance stems from the fact that many $\Solv$ manifolds arise as cusps of Hirzebruch modular surfaces \cite{Hir}; and the understanding of their signature defect was the main motivation behind the discovery of the Atiyah-Patodi-Singer index theorem for manifolds with boundary \cite{APS}, see \cite{ADS}. In a related fashion, three-dimensional $\Solv$ manifolds are also among the simplest examples where non-abelian Fourier analysis can be performed \cite{Bre}. More recently, the computation of their Heegaard Floer homology has provided evidence for the far-reaching $L$-space conjecture \cite{BCW}.
\par
In this paper we study the monopole Floer homology of a $\Solv$ rational homology sphere $Y$ from a geometric viewpoint. Monopole Floer homology is a package of invariants of three-manifolds introduced by Kronheimer and Mrowka in \cite{KM} obtained by studying the Seiberg-Witten equations (see also \cite{Lin3} for a friendly introduction). While monopole Floer homology is a topological invariant, and can be therefore computed in many cases using tools such as surgery exact triangles \cite{KMOS}, it is interesting to understand its relation with special geometric structures on the space, the case of Seifert fibered spaces \cite{MOY} being the prototypical example. In our case, a $\Solv$-rational homology sphere $Y$ has the structure of a torus semibundle, and admits several different $\Solv$-metrics obtained by rescaling the metrics along the fibers (see Section \ref{fourier} for a more detailed discussion of $\Solv$ geometry). Our main result is then the following.
\begin{thm}\label{main}
Let $Y$ be a $\Solv$-rational homology sphere, equipped with a $\Solv$ metric. If the fibers are small enough, then there are small regular perturbations for which the Seiberg-Witten equations on $Y$ do not admit irreducible solutions.
\end{thm}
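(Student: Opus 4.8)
The plan is to rule out irreducible solutions by combining a priori Weitzenböck estimates with a spectral analysis of the Dirac operator adapted to the collapsing fibers. First I would record the standard pointwise bound: a direct computation shows that the $\Solv$ metric and all of its fiber rescalings $g_\epsilon$ have constant scalar curvature $s\equiv -2$, so by the maximum principle applied to the Weitzenböck formula $D_A^2=\nabla_A^*\nabla_A+\tfrac{s}{4}+\tfrac12\rho(F_{A^t})$ together with the curvature equation, any solution $(A,\psi)$ of the unperturbed (or sufficiently small perturbed) equations satisfies $|\psi|^2\le 2$ pointwise. Since the volume of $Y$ in $g_\epsilon$ scales like $\epsilon^2$ as the fibers shrink, this upgrades to $\|\psi\|_{L^2}^2=O(\epsilon^2)$, and through the curvature equation $\tfrac12\rho(F_{A^t})=(\psi\psi^*)_0$ to the smallness $\|F_{A^t}-F_{A_0^t}\|_{L^2}=O(\epsilon)$, where $A_0$ is the flat reducible spin$^c$ connection with $F_{A_0^t}=0$, unique up to gauge because $Y$ is a rational homology sphere.

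The heart of the argument is a uniform spectral gap in the directions transverse to the base. Viewing $Y$ as a torus semibundle, I would Fourier-decompose spinors along the $T^2$ fibers using the left-invariant structure of the $\Solv$ group and the non-abelian Fourier analysis of the introduction, writing $\psi=\psi_0+\psi_\perp$ with $\psi_0$ the fiberwise-constant (zero frequency) part. Because the fiber metric carries a factor $\epsilon^2$, every fiber-nonconstant mode has covariant derivative of size $\gtrsim\epsilon^{-1}$, so $\int_Y|\nabla_A\psi|^2\gtrsim\epsilon^{-2}\int_Y|\psi_\perp|^2$. On the other hand, integrating the Weitzenböck identity against $\psi$ and using $s\equiv-2$ together with the pointwise curvature bound $|\rho(F_{A^t})|\le|\psi|^2\le 2$ yields $\int_Y|\nabla_A\psi|^2\lesssim\int_Y|\psi|^2$. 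Comparing the two estimates forces $\|\psi_\perp\|_{L^2}^2\lesssim\epsilon^2\|\psi\|_{L^2}^2$, so the spinor of any irreducible solution concentrates in the zero-frequency sector as $\epsilon\to 0$.

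It then remains to eliminate the zero-frequency sector. Restricting the Dirac operator to fiberwise-constant spinors produces an explicit one-dimensional operator along the base circle, twisted by the Anosov monodromy and the involutions defining the semibundle; although this reduced operator may have kernel for the bare metric, I would show that a small perturbation supported in the zero-frequency sector makes it invertible with a gap $c_0>0$ uniform in $\epsilon$. Projecting $D_A\psi=0$ onto the zero mode gives $c_0\|\psi_0\|\le\|D^{\mathrm{red}}_A\psi_0\|=\|(\text{coupling to }\psi_\perp)\|\lesssim\epsilon\|\psi\|$, and since $\|\psi\|\approx\|\psi_0\|$ this is a contradiction for small $\epsilon$ unless $\psi=0$. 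Standard Sard–Smale theory then lets me choose the perturbation simultaneously small, of the required form, and regular, while the a priori estimates—uniform over all perturbations below a fixed size—guarantee that the absence of irreducibles persists.

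The main obstacle is the second step: shrinking the fibers collapses the geometry, so the spectral gap cannot be extracted from soft elliptic estimates, whose constants degenerate. The delicate point is to make the fiberwise Fourier decomposition compatible with the twisted connection $A$—whose difference $a=A-A_0$ is controlled only through $\|da\|$ and a gauge choice—so that the lower bound $\int_Y|\nabla_A\psi|^2\gtrsim\epsilon^{-2}\|\psi_\perp\|_{L^2}^2$ survives with the connection included and uniformly in $\epsilon$, and to ensure the monodromy-permuted nonzero frequency orbits all contribute large eigenvalues. This is precisely where the representation theory of the $\Solv$ group, rather than generic collapsing estimates, does the essential work.
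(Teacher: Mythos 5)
Your plan is a direct adiabatic-collapse analysis of the Dirac equation, genuinely different from the paper's route (which bounds the first eigenvalue $\lambda_1^*$ on coexact $1$-forms via Fourier analysis, invokes Theorem \ref{spectral}, and then treats the borderline transversality separately); however, the two steps that carry all the weight are asserted rather than proved, and both are genuine gaps. The first is the pair of estimates $\int_Y|\nabla_A\psi|^2\gtrsim \epsilon^{-2}\|\psi_\perp\|_{L^2}^2$ and $\|P_0(\text{coupling})\|\lesssim \epsilon\|\psi\|$ \emph{with the connection $A$ included}. Your control on $a=A-A_0$ is only $\|da\|_{L^2}=O(\epsilon)$; to prevent the term $\rho(a)\psi$ from destroying the fiberwise frequency gap you need a gauge-fixed bound on $a$ in a norm strong enough to multiply spinors ($L^\infty$, or $L^4$ together with a Sobolev inequality), and those are exactly the constants that degenerate as the fibers collapse. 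Even the $L^2$ gauge-fixing constant for coexact forms is $(\lambda_1^*)^{-1/2}$, so a uniform-in-$\epsilon$ version of it is essentially Proposition \ref{maineigen} --- the paper's core computation, which your plan never performs. You acknowledge this obstacle (``this is precisely where the representation theory \dots does the essential work''), but naming the missing work is not doing it: as written, the concentration estimate, and with it everything downstream, is unproven.

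The second gap is the zero-frequency sector, which is precisely the borderline transversality problem and cannot be dispatched by ``perturb the reduced operator to be invertible, then apply Sard--Smale.'' The reduced operator really does have kernel for the bare metric --- on the torus-bundle cover the kernel of $D_{B_*}$ is the two-dimensional space of constant spinors (Lemma \ref{harmonic}) --- and whether any of it descends to $Y$ depends delicately on the particular spin$^c$ structure; the paper needs the finite-cover analysis of $\spin_0$ to settle exactly this point. More importantly, Sard--Smale yields \emph{regularity} of a generic small perturbation; it does not yield \emph{absence of irreducible solutions} for that perturbation. In the borderline case $\lambda_1^*=-\inf\tilde{s}/2$, an arbitrarily small perturbation can a priori create irreducible solutions bifurcating off the reducible locus; ruling this out is the entire content of Section \ref{trans}, which uses the specific admissible perturbation $D_B\Psi=\delta\Psi$, a blow-up compactness argument as $\delta\to 0$, and the identification of the limiting coexact form with the eigenform $\eta$ (hence with the single spin structure $\spin_0$). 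Your perturbation ``supported in the zero-frequency sector'' would in addition have to be admissible in the sense of \cite{KM} for the Floer-theoretic conclusion to make sense. The a priori estimates in your first paragraph are correct ($|\psi|^2\le 2$ by the maximum principle, hence $\|F_{A^t}\|_{L^2}=O(\epsilon)$), but they are the routine part of the argument.
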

The following is an immediate consequence of the theorem. Recall that a rational homology sphere $Y$ is an $L$-\textit{space} if $\HMf_*(Y,\spin)=\mathbb{Z}[U]$ as a $\mathbb{Z}[U]$-module for each spin$^c$ structure $\spin$.
\begin{cor}\label{corol}
Let $Y$ be a $\Solv$-rational homology sphere. Then $Y$ is an $L$-space.
\end{cor}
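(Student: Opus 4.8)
The plan is to combine Theorem~\ref{main} with the foundational fact, due to Kronheimer and Mrowka~\cite{KM}, that the monopole Floer groups are topological invariants: they do not depend on the Riemannian metric nor on the admissible regular perturbation of the Seiberg--Witten equations used to define them. First I would fix the $\Solv$ rational homology sphere $Y$ and a spin$^c$ structure $\spin$, and invoke Theorem~\ref{main} to select a $\Solv$ metric with sufficiently small fibers together with a small regular perturbation for which the perturbed Seiberg--Witten equations carry no irreducible solutions. By invariance, computing $\HMf_*(Y,\spin)$ from this particular geometric datum returns the correct topological invariant.

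Next I would examine the Floer complex in the absence of irreducibles. In the blown-up configuration space the critical points fall into three classes---irreducible, boundary-stable reducible, and boundary-unstable reducible---and $\HMf$ is computed from the subcomplex generated by the irreducible and boundary-unstable generators. Since the chosen perturbation admits no irreducible solutions, the group $C^o$ of irreducible generators vanishes, so the complex computing $\HMf_*(Y,\spin)$ is generated entirely by boundary-unstable reducibles. As $Y$ is a rational homology sphere we have $b_1(Y)=0$, whence for each $\spin$ there is a single reducible up to gauge; in the blow-up this reducible contributes a semi-infinite tower of boundary-unstable critical points indexed by the eigenvalues of the perturbed Dirac operator.

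It then remains to identify the homology of this tower as a $\mathbb{Z}[U]$-module. With $C^o=0$ there are no differentials running to or from irreducible critical points, so the differential on the reducible complex is the standard one governed by the $U(1)$-action on the reducible locus; for a rational homology sphere this computes the free $\mathbb{Z}[U]$-module on a single generator, exactly as in the model computation for $S^3$ in~\cite{KM}. Hence $\HMf_*(Y,\spin)\cong\mathbb{Z}[U]$ for every $\spin$, which is precisely the statement that $Y$ is an $L$-space.

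The deduction is essentially formal once the machinery is in place, and the one point demanding care is the passage from ``no irreducible solutions'' to the clean answer $\mathbb{Z}[U]$: here one must use $b_1(Y)=0$ to guarantee a unique reducible, and the vanishing of $C^o$ to rule out any interaction that could create extra homology or $U$-torsion in the reducible tower. I expect this structural bookkeeping, rather than any further analysis, to be the substantive content; the hard analytic work has already been absorbed into Theorem~\ref{main}.
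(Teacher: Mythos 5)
Your proposal is correct and is exactly the argument the paper leaves implicit: the corollary is stated as an immediate consequence of Theorem \ref{main}, with the passage from ``no irreducible solutions for some regular perturbation'' to $\HMf_*(Y,\spin)\cong\mathbb{Z}[U]$ being precisely the standard bookkeeping in \cite{KM} that you describe (topological invariance, unique reducible since $b_1(Y)=0$, and the tower of boundary-unstable critical points over it). One small refinement: even with $C^o=0$, the differential $\hat\partial$ on $C^u$ a priori contains a term of the form $\bar\partial^s_u\partial^u_s$ counting \emph{irreducible trajectories} between reducible critical points, not just the flow in the reducible locus; this causes no harm because the tower generators all sit in gradings of the same parity, so every degree-one differential vanishes and the conclusion $\HMf_*(Y,\spin)\cong\mathbb{Z}[U]$ stands.
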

The analogous result in the setting of Heegaard Floer homology (which is known to yield isomorphic invariants, see \cite{HFHM1},\cite{CGH1} and subsequent papers) was proved by topological means in \cite{BCW} with $\mathbb{Z}/2\mathbb{Z}$-coefficients, and extended to $\mathbb{Z}$-coefficients in \cite{RR}. Let us also point out that compact $\Solv$ manifolds have either $b_1=0$ or $1$; in the latter case, they are Anosov torus bundles over the circle, and their Heegaard Floer homology (with $\mathbb{Z}$ coefficients) was computed in \cite{Bal}.
\\
\par
In our approach, we look at the monopole Floer homology of $\Solv$-manifolds from the point of view of spectral geometry. The main ingredient in the proof of Theorem \ref{main} is the following relation, for a rational homology sphere, between the existence of irreducible solutions to the Seiberg-Witten equations and the first eigenvalue $\lambda_1^*$ of the Hodge Laplacian on coexact $1$-forms (which improves on the main result of \cite{Lin4}).
\begin{thm}[Theorem $3$ of \cite{LL}]\label{spectral}
Let $Y$ be a rational homology sphere equipped with a metric $g$. Denote by $\tilde{s}(p)$ the sum of the two least eigenvalues of the Ricci curvature at the point $p$. If the inequality 
\begin{equation*}
\lambda_1^*\geq -\mathrm{inf}_{p\in Y}\tilde{s}(p)/2
\end{equation*}
holds, then the Seiberg-Witten equations do not admit irreducible solutions.
\end{thm}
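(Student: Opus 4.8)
The plan is to argue by contradiction: suppose $(A,\Phi)$ is an irreducible solution, so that $\Phi\not\equiv 0$, and then use the hypothesis $\lambda_1^*\geq -\inf_{p}\tilde s(p)/2$ to manufacture a global identity all of whose terms are nonnegative, forcing $\Phi\equiv 0$. First I would record the two structural equations, $D_A\Phi=0$ and $\tfrac12\rho(F_{A^t})=(\Phi\Phi^*)_0$, and apply the Lichnerowicz--Weitzenb\"ock formula $D_A^2=\nabla_A^*\nabla_A+\tfrac{s}{4}+\tfrac12\rho(F_{A^t})$. Pairing with $\Phi$ and integrating gives Witten's identity
\begin{equation*}
0=\int_Y|\nabla_A\Phi|^2+\tfrac14\int_Y s\,|\Phi|^2+\tfrac12\int_Y|\Phi|^4 .
\end{equation*}
On its own this only recovers the positive--scalar--curvature obstruction; the entire point is to downgrade the scalar curvature $s$ to the smaller quantity $\tilde s$ and to feed in the spectral gap $\lambda_1^*$.

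The key refinement is that a nonvanishing spinor selects, through the trace-free Hermitian endomorphism $(\Phi\Phi^*)_0$, a preferred direction in $TY$, and that the curvature contribution in the Weitzenb\"ock formula couples only to the Ricci curvature of the $2$-plane orthogonal to this direction. I would make this precise pointwise, using a Kato-type inequality adapted to the selected direction to estimate $\int|\nabla_A\Phi|^2$ from below and to absorb the ``largest'' Ricci eigenvalue, thereby replacing $\tfrac14 s\,|\Phi|^2$ by $\tfrac14\tilde s\,|\Phi|^2$ plus manifestly nonnegative terms. This is exactly the improvement of \cite{Lin4}: the full scalar curvature $s=\lambda_1+\lambda_2+\lambda_3$ is effectively cut down to the sum $\tilde s$ of the two least Ricci eigenvalues.

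Next I would bring in $\lambda_1^*$. Since $Y$ is a rational homology sphere, $b_1(Y)=0$, so the Bianchi identity $dF_{A^t}=0$ makes $*F_{A^t}$ coclosed, hence coexact; the variational characterization of the first coexact eigenvalue then yields $\int_Y|d\!*\!F_{A^t}|^2\geq\lambda_1^*\int_Y|F_{A^t}|^2$. Using the curvature equation to identify $\|F_{A^t}\|^2$ with a multiple of $\int_Y|\Phi|^4$, and the Dirac equation to express $d\!*\!F_{A^t}=\pm\delta F_{A^t}$ through the spinorial current $\langle\nabla_A\Phi,\rho(\cdot)\Phi\rangle$, I would turn this into a bound coupling $\int_Y|\nabla_A\Phi|^2$ to $\lambda_1^*\int_Y|F_{A^t}|^2$. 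Combining the refined Witten identity with this spectral bound, after a Cauchy--Schwarz and integration-by-parts bookkeeping, the whole computation should collapse to an inequality of the schematic form $0\geq\big(\lambda_1^*+\tfrac12\inf_{p}\tilde s(p)\big)\,Q$ with $Q\geq 0$ (for instance $Q=\int_Y|F_{A^t}|^2$). The right-hand side is nonnegative precisely under the hypothesis, so $Q=0$; hence $F_{A^t}\equiv 0$ and then $\Phi\equiv 0$, contradicting irreducibility.

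The main obstacle lies in the two middle steps. The first is establishing the pointwise refined Weitzenb\"ock inequality with $\tilde s$ in place of $s$: one must carefully account for the ``transverse'' terms created when $\Phi$ fails to be covariantly constant and show that, after the direction selection, only the two least Ricci eigenvalues survive. The second is matching the numerical constants so that the spectral gap enters with exactly the coefficient $\tfrac12$ demanded by the statement, uniformly over the points where $\tilde s$ is most negative. Aligning these two normalizations, so that the positive contribution of $\lambda_1^*$ and the negative contribution of $\inf_p\tilde s(p)$ meet at the stated threshold, is where the real work will be.
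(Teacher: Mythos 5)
Your outline has the same architecture as the actual proof of this statement (note that the paper never proves it itself --- it imports it as Theorem 3 of \cite{LL} --- but the ingredients of that proof are exactly the ones quoted and reused in Section \ref{trans}): Witten's integrated Weitzenb\"ock argument, the observation that on a rational homology sphere the curvature equation produces a \emph{coexact} $1$-form, the variational characterization of $\lambda_1^*$, and a Ricci-eigenvalue refinement of $s$ to $\tilde s$. Nevertheless there are two genuine gaps. The first is your endgame. You reduce to $0\geq\bigl(\lambda_1^*+\tfrac12\inf_p\tilde s(p)\bigr)Q$ with $Q\geq0$ and conclude $Q=0$; but the hypothesis only guarantees that the coefficient is \emph{nonnegative}, and when $\lambda_1^*=-\inf_p\tilde s(p)/2$ the resulting inequality $0\geq0$ carries no information. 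This equality case cannot be dismissed: it is the only case this paper ever uses ($\Solv$ metrics have $\tilde s\equiv-2$ and $\lambda_1^*=1$, exactly borderline), and it is why the theorem is stated with $\geq$ rather than $>$. The repair is to keep the strictly positive term produced by the quadratic part of the equations: since $\tfrac12\rho(F_{A^t})\Phi=(\Phi\Phi^*)_0\Phi=\tfrac12|\Phi|^2\Phi$, the Weitzenb\"ock argument carries an extra summand $\tfrac12\int_Y|\Phi|^6$, so the correct final inequality reads, schematically, $0\geq\bigl(\lambda_1^*+\tfrac12\inf_p\tilde s(p)\bigr)\cdot\tfrac12\int_Y|\Phi|^4+\tfrac12\int_Y|\Phi|^6$, and the last term alone forces $\Phi\equiv0$ even at the borderline.

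The second gap is the mechanism by which $s$ is improved to $\tilde s$. The curvature term in the Lichnerowicz formula for spinors is scalar, so it does not ``couple to the Ricci curvature of a $2$-plane,'' and a Kato-type inequality on $\int|\nabla_A\Phi|^2$ is not how the improvement arises; indeed your unweighted Witten identity, which involves $\int|\nabla_A\Phi|^2$, never connects to the spectral data. The actual argument runs through the $1$-form $\xi=\rho^{-1}\bigl((\Phi\Phi^*)_0\bigr)$ (which by the curvature equation is $\tfrac{1}{2}*F_{A^t}$ up to sign, hence coexact by your $b_1=0$ observation, with $|\xi|^2=\tfrac14|\Phi|^4$) and rests on three exact identities your plan never states: (i) the pointwise consequence of the Dirac equation $|\nabla\xi|^2+|d\xi|^2=|\Phi|^2|\nabla_A\Phi|^2$, which is precisely the identity from \cite{LL} quoted in Section \ref{trans}; (ii) the \emph{weighted} Witten identity, obtained by multiplying $\tfrac12\Delta|\Phi|^2=-\tfrac{s}{4}|\Phi|^2-\tfrac12|\Phi|^4-|\nabla_A\Phi|^2$ by $|\Phi|^2$ and integrating, so that it is $\int|\Phi|^2|\nabla_A\Phi|^2$, not $\int|\nabla_A\Phi|^2$, that gets estimated; (iii) the Bochner formula for the coexact form $\xi$, namely $\int|d\xi|^2=\int|\nabla\xi|^2+\int\mathrm{Ric}(\xi,\xi)$, which combined with (i) gives $\int|\Phi|^2|\nabla_A\Phi|^2=2\int|d\xi|^2-\int\mathrm{Ric}(\xi,\xi)$. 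It is here, through the pointwise bound $\mathrm{Ric}(\xi,\xi)\leq\lambda_{\max}|\xi|^2$, that the largest Ricci eigenvalue is absorbed and $s-\lambda_{\max}=\tilde s$ appears; $\lambda_1^*$ enters through $\int|d\xi|^2\geq\lambda_1^*\int|\xi|^2$. Chaining (i)--(iii) with these two bounds yields $\tfrac{\lambda_1^*}{2}\int|\Phi|^4\leq-\tfrac14\int\tilde s\,|\Phi|^4-\tfrac12\int|\Phi|^6$, which is the inequality your plan was aiming for, with the crucial $|\Phi|^6$ term intact.
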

In the case of a $\Solv$-metric, $\tilde{s}=-2$ at every point, so in order to prove Theorem \ref{main}, we need to show that for suitable $\Solv$-metrics on $Y$, $\lambda_1^*\geq 1$. Let us describe the strategy behind the proof of this by discussing the content of each section.
\par
In Section \ref{fourier}, we review some facts about the geometry and topology of $\Solv$-manifolds. As $\Solv$ is the left-invariant metric for a solvable Lie group structure on $\mathbb{R}^3$, one can study Fourier analysis on it, and we will introduce the basic ideas behind it. In Section \ref{spectralbound}, we use the aforementioned Fourier analysis to show that, for metrics with sufficiently small fibers, $\lambda_1^*= 1$, so that the Seiberg-Witten equations do not admit irreducible solutions by Theorem \ref{spectral}. As these metrics have $\lambda^*_1$ is exactly $1$, they lie in the borderline case of Theorem \ref{spectral}, and transversality is a quite subtle issue. We discuss it in Section \ref{trans}, where we will study explicit small perturbations of the equations and existence of harmonic spinors.

\vspace{0.3cm}
\textit{Acknowledgements. }The author would like to thank Liam Watson for some helpful comments. Thisf work was partially funded by NSF grant DMS-1807242.

\vspace{0.3cm}
\section{Compact Solvmanifolds and their Fourier analysis}\label{fourier}
We start by reviewing the basics of $\Solv$-geometry; most of the following discussion is taken from Section $12.7$ of \cite{Mar}. Recall that $\Solv$ is the Riemannian manifold $\mathbb{R}^3$ equipped with the metric
\begin{equation*}
e^{2z}dx^2+e^{-2z}dy^2+dz^2.
\end{equation*}
This is the left-invariant Riemannian metric on $\mathbb{R}^3$ when equipped with the solvable Lie group structure
\begin{equation*}
(x,y,z)\cdot(x',y',z')=(x+e^{-z}x', y+e^{z}y', z+z').
\end{equation*}
This can be though as the semidirect product corresponding to the splitting of
\begin{equation*}
0\rightarrow \mathbb{R}^2\rightarrow \Solv\stackrel{p}{\longrightarrow} \mathbb{R}\rightarrow 0,
\end{equation*}
where $p(x,y,z)=z$, given by
\begin{equation*}
z\mapsto \begin{bmatrix}e^z&0\\0 &e^{-z}\end{bmatrix}\in\mathrm{SL}(2,\mathbb{R}),
\end{equation*}
seen as linear automorphisms of $\mathbb{R}^2$. The Ricci tensor is given in this coordinates by
\begin{equation*}
\begin{bmatrix}0&0&0\\0 &0&0\\ 0&0&-2\end{bmatrix},
\end{equation*}
so that both $s$ and $\tilde{s}$ are $-2$ at each point. We can see that the foliation in $\mathbb{R}^2$ by the planes with $z$ constant descend to any compact $\Solv$-manifold; in fact, it descends to a foliation for which all the leaves are tori or Klein bottles.
\\
\par
Orientable compact solvmanifolds either have $b_1=0$ or $1$. The manifolds of the latter type, which will be denoted by $\tilde{Y}$, arise as quotients $\Gamma\setminus\Solv$ for lattices $\Gamma\subset \Solv$. Every such lattice is a split extension
\begin{equation*}
0\rightarrow \Lambda\rightarrow \Gamma\stackrel{p}{\longrightarrow} a\mathbb{Z}\rightarrow 0,
\end{equation*}
where $\Lambda\subset \mathbb{R}^2$ is a lattice invariant under the action of $\begin{bmatrix}e^a&0\\0 &e^{-a}\end{bmatrix}$. The underlying topological manifold is a torus bundle with monodromy $A\in \mathrm{SL}(2,\mathbb{Z})$; here $|\mathrm{tr}A|>2$ (i.e. $A$ is Anosov) and $e^a$ and $e^{-a}$ are its eigenvalues.
\begin{example}\label{figure8}
Consider $A=\begin{bmatrix}2&1\\1 &1\end{bmatrix}$. The mapping torus is well-known to be the zero surgery on the figure eight knot. Its eigenvalues are $\varphi^2$ and $\varphi^{-2}$ where $\varphi=\frac{1+\sqrt{5}}{2}$ is the golden ratio. Recall that it satisfies $\varphi^2=\varphi+1$. Consider the vectors
\begin{equation*}
v=(\varphi,1-\varphi)\quad w=(1,1).
\end{equation*}
If $S$ is the matrix with colums $v$ and $w$, we have $A=S^{-1}\begin{bmatrix}\varphi^2&0\\0 &\varphi^{-2}\end{bmatrix}S$; setting $\Lambda$ to be the lattice generated by $v$ and $w$, and $a=\mathrm{log}(\varphi^2)$, we obtain the lattice $\Gamma$ equipping the mapping torus of $A$ with a $\Solv$ metric.
\end{example}
\begin{remark}\label{NT}
We can also think about this example from a more number theoretic viewpoint, which makes the connection with \cite{Hir} and \cite{ADS} clearer. Consider the field $k=\mathbb{Q}(\sqrt{5})$. It is totally real, and it comes with two natural embeddings $\phi_+,\phi_-$ into $\mathbb{R}$ sending $\sqrt{5}$ to $\pm\sqrt{5}$. The ring of integers $\mathcal{O}_k$ is the lattice $\Lambda=\mathbb{Z}[\varphi]$ which has basis $\varphi$ and $1$. The group of totally positive units is generated by $\varphi^2$; and it is easy to see that its multiplication action is given in our chosen basis by $A$. Finally, we can embed the lattice $\Lambda$ in $\mathbb{R}^2$ using $(\phi_+,\phi_-)$; our basis elements are mapped to the vectors $v$ and $w$.
\end{remark}
A $\Solv$-manifold with $b_1=0$, denoted by $Y$, is a torus semibundle; therefore it admits a double cover $\tilde{Y}$ which is a $\Solv$ torus bundle $\Gamma\setminus\Solv$. Then $Y$ can be described in the following way. For a choice of basis $v,w$ of the lattice $\Lambda=\Gamma\cap \mathbb{R}^2$, with corresponding left-invariant extension $\mathcal{V},\mathcal{W}$, we can consider the additional orientation-preserving isometry of $\tilde{Y}$ sending
\begin{equation*}
(a\mathcal{V}+b\mathcal{W},z)\mapsto ((a+\frac{1}{2})\mathcal{V}-b\mathcal{W},-z).
\end{equation*}
In particular, the action on the fiber $z=0$ (which is preserved) is obtained by $(av+bw)\mapsto (a+\frac{1}{2})v-bw$; and the quotient of the fiber is a Klein bottle. This is an order $2$ isometry $\tilde{Y}$, and the quotient is $Y$.
\\
\par
From this description, we see that on any $\Solv$-manifold $Y$we obtain a one parameter family of metrics obtained by rescaling the lattice $\Lambda$; this can be seen concretely in Example \ref{figure8}.
\\
\par
Let us now introduce the basics of Fourier analysis on a compact Solvmanifold with $b_1(Y)=1$. We follow the first chapter of \cite{Bre}, to which we refer for a pleasant, more thorough, discussion.
\par
Consider a smooth function $f:\Gamma\setminus\Solv\rightarrow\mathbb{R}$. This can be thought (with a little abuse of notation) as a function $f:\Solv\rightarrow \mathbb{R}$ which is left invariant under $\Gamma$. In particular, it is invariant under the action of $\Lambda\subset\Gamma$, i.e.
\begin{equation*}
f(\underline{x}+m,z)=f(\underline{x},z)\text{ for all }m\in \Lambda.
\end{equation*}
We can therefore expand $f$ in Fourier series in the $\mathbb{R}^2\times\{0\}\subset\Solv$ directions
\begin{equation*}
f(\underline{x},z)=\sum_{\mu\in \Lambda'} a_{\underline{\mu}}(z)e^{ i \underline{\mu}\cdot \underline{x}}.
\end{equation*}
for some smooth functions $a_{\underline{\mu}}(z)$. Here $\Lambda'$ is the dual lattice of $\Lambda$, where we use the convention
\begin{equation*}
\Lambda'=\{\underline{\mu}\in \mathbb{R}^2\lvert \underline{\mu}\cdot m\in 2\pi\mathbb{Z}\text{ for all }m\in\Lambda\}.
\end{equation*}
We now use the fact that $f$ is invariant by the action of $(\underline{0},a)$. Letting $A=\begin{bmatrix}e^a&0\\0 &e^{-a}\end{bmatrix}$, we see that
\begin{equation*}
f(\underline{x},z)=f((\underline{0},a)\cdot(\underline{x},z))=f(A\underline{x}, z+a),
\end{equation*}
hence, after reindexing,
\begin{equation*}
\sum_{\underline{\mu}\in \Lambda'} a_{\underline{\mu}}(z)e^{ i \underline{\mu}\cdot \underline{x}}=\sum_{\underline{\mu}\in \Lambda'} a_{\underline{\mu}}(z+a)e^{ i \underline{\mu}\cdot A\underline{x}}=\sum_{\underline{\mu}\in \Lambda'} a_{\underline{\mu}\cdot A}(z+a)e^{ i \underline{\mu}\cdot\underline{x}}.
\end{equation*}
This implies that
\begin{equation*}
a_{\underline{\mu}}(z)=a_{\underline{\mu}\cdot A}(z+a),
\end{equation*}
so $a_{\underline{\mu}}$ determines via translation $a_{\underline{\mu}\cdot A^n}$. In particular, the Fourier series is determined by the collection of functions for $a_{\underline{\mu}}(z)$ for $\underline{\mu}\in \Lambda'/V$, $V$ being the group of automorphisms of the dual lattice $\Lambda'$ generated by $A$. While $a_0$ is a periodic function with period $a$, it can be shown that the functions $a_{\underline{\mu}}(z)$ for $\underline{\mu}\neq 0$ are in the Schwartz-type space
\begin{equation}\label{schwartz}
\mathcal{S}=\{f\lvert e^{nz}f^{(m)}(z) \text{ is bounded for all }n\in\mathbb{Z},m\geq0\},
\end{equation}
where $f^{(m)}$ denotes the $m$th derivative of $f$.
\\
\par
With this in mind, let us study as a warm-up example the Laplacian on functions on $\Gamma\setminus\Solv$, which can be written as
\begin{equation*}
\Delta f=-(e^{-2z}f_{xx}+e^{+2z}f_{yy}+f_{zz}).
\end{equation*}
Let us use the decomposition in Fourier modes discussed above. We then have a $L^2$-unitary decomposition
\begin{equation*}
\Delta=\bigoplus_{\underline{\mu}\in M'/V} \Delta_{\underline{\mu}},
\end{equation*}
where $\Delta_{0}$ acts on $L^2(\mathbb{R}/a\mathbb{Z})$ and $\Delta_{\underline{\mu}}$ is a diagonalizable operator on  $L^2(\mathbb{R})$. In particular, if we have $\underline{\mu}=(\mu,\mu')$, the corresponding operator is given by substituting
\begin{equation*}
\frac{d}{dx}\mapsto  i\mu,\quad\frac{d}{dy}\mapsto i\mu'
\end{equation*}
so that
\begin{equation*}
\Delta_\mu f=-f_{zz}+(\mu^2 e^{-2z}+(\mu')^2e^{2z})f.
\end{equation*}
Therefore $\lambda$ is an eigenvalue of $\Delta_{\underline{\mu}}$ if and only if
\begin{equation*}
f_{zz}=(\mu^2 e^{-2z}+(\mu')^2e^{2z}-\lambda)f.
\end{equation*}
While this equation is not solvable in terms of elementary functions, we can still understand the basic properties of its spectrum. Let us first recall the following well-known elementary lemma.
\begin{lemma}\label{keyODE}
Suppose $f:\mathbb{R}\rightarrow\mathbb{R}$ solves the second order linear ODE
\begin{equation*}
f_{zz}=\Phi(z)\cdot f
\end{equation*}
where $\Phi$ is smooth and $\Phi(z)>0$ everywhere. Then $f$ cannot be in $L^2(\mathbb{R})$.  
\end{lemma}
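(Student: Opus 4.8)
The plan is to show that any nontrivial solution grows at least linearly on one of the two half-lines $z\ge z_0$ or $z\le z_0$, which immediately forces $\int_{\mathbb{R}}f^2=\infty$. Since the only degenerate case is $f\equiv 0$, I assume $f\not\equiv 0$ and fix a point $z_0$ with $f(z_0)\ne 0$; after replacing $f$ by $-f$ if necessary, I may assume $f(z_0)>0$. The structural fact I would exploit is that wherever $f>0$ the hypothesis $\Phi>0$ gives $f_{zz}=\Phi f>0$, so $f$ is strictly convex there and $f'$ is strictly increasing. The argument then splits according to the sign of $f'(z_0)$.

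If $f'(z_0)\ge 0$, I claim $f$ stays positive and nondecreasing for all $z\ge z_0$: as long as $f>0$ the derivative $f'$ is increasing, hence remains $\ge f'(z_0)\ge 0$, so $f$ cannot decrease and positivity cannot be lost. Concretely, $f'(z_0+1)>0$ (strictly, since $f_{zz}>0$ on $[z_0,z_0+1]$), and because $f'$ keeps increasing we get $f'(z)\ge f'(z_0+1)=:c>0$ for all $z\ge z_0+1$, whence $f(z)\ge f(z_0+1)+c(z-z_0-1)\to\infty$. Thus $f$ grows at least linearly and $\int_{z_0}^{\infty}f^2=\infty$, so $f\notin L^2(\mathbb{R})$.

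The case $f'(z_0)\le 0$ is handled symmetrically by running the same reasoning to the left. For $z\le z_0$, monotonicity of $f'$ gives $f'(z)\le f'(z_0)\le 0$, so $f$ is nonincreasing in $z$; hence $f$ is nondecreasing as $z\to-\infty$ and stays positive, and strict convexity forces $f'(z_0-1)<f'(z_0)\le 0$, so $f'(z)\le f'(z_0-1)=:-c'<0$ for $z\le z_0-1$. This yields $f(z)\ge f(z_0-1)+c'(z_0-1-z)\to\infty$ as $z\to-\infty$, so $\int_{-\infty}^{z_0}f^2=\infty$ and again $f\notin L^2(\mathbb{R})$.

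The one point I would take care with—and the only place where anything beyond elementary calculus is needed—is justifying that positivity of $f$ genuinely persists on the entire relevant half-line rather than merely on a maximal subinterval. This is exactly where the monotonicity of $f$ (dictated by the sign of $f'$, which is controlled by convexity) does the work: on the interval where $f>0$ the function is monotone in the direction that prevents it from ever returning to $0$, so its maximal interval of positivity is in fact the whole half-line. Once this is in place, the linear-growth estimates above are immediate and the conclusion follows.
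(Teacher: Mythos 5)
Your proof is correct and rests on the same idea as the paper's: wherever $f>0$ one has $f_{zz}=\Phi f>0$, so convexity/monotonicity of $f'$ prevents the solution from ever decaying back toward zero, forcing $f$ to stay bounded away from $0$ on a half-line and hence fail to be in $L^2$. The only differences are cosmetic: the paper reduces to a single case by the substitutions $f(z)\mapsto -f(z)$, $f(z)\mapsto f(-z)$ and argues by contradiction via the mean value theorem (obtaining the constant lower bound $f\geq c$), whereas you treat the two signs of $f'(z_0)$ separately and extract the slightly stronger conclusion of linear growth.
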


\begin{proof}
Possibly after replacing $f(z)$ by $-f(z)$ or $f(-z)$, we can assume that at $x_0$ both $f(x_0)=c>0$ and $f'(x_0)\geq0$. Suppose there is $t_0>x_0$ with $0<f(t_0)<f(x_0)$. We can also assume $f>0$ on $[x_0,t_0]$. Then there is $x_0<t<t_0$ with $f'(t)<0$. Applying again the mean value theorem, there is $x_0<t'<t$ with $f''(t')<0$, which is contradiction as $f''(t')=\Phi(z)\cdot f>0$. So $f(x)\geq c$ for $x\geq x_0$, and the result follows.
\end{proof}

We then have the following.

\begin{lemma}\label{eigenvalue}
For $\underline{\mu}\neq0$ the first eigenvalue of $\Delta_{\underline{\mu}}$ is at least $2|\mu\mu'|\neq0$.
\end{lemma}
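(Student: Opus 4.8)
The plan is to reduce the statement to the one-dimensional comparison principle already recorded in Lemma \ref{keyODE}. Recall that $\lambda$ is an eigenvalue of $\Delta_{\underline{\mu}}$ exactly when the ODE $f_{zz}=\Phi(z)f$ with
\begin{equation*}
\Phi(z)=\mu^2 e^{-2z}+(\mu')^2 e^{2z}-\lambda
\end{equation*}
admits a nontrivial solution $f\in L^2(\mathbb{R})$. The whole argument then hinges on controlling the potential $g(z):=\mu^2 e^{-2z}+(\mu')^2 e^{2z}$ from below, so the first thing I would do is locate its minimum.

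By the AM--GM inequality,
\begin{equation*}
\mu^2 e^{-2z}+(\mu')^2 e^{2z}\geq 2\sqrt{\mu^2 e^{-2z}\cdot(\mu')^2 e^{2z}}=2|\mu\mu'|,
\end{equation*}
with equality precisely when $\mu^2 e^{-2z}=(\mu')^2 e^{2z}$, that is at $e^{2z}=|\mu|/|\mu'|$ (assuming both components nonzero). Hence $g(z)\geq 2|\mu\mu'|$ for every $z$, and this bound is attained. With this in hand I would argue by contradiction: suppose $\lambda$ is an eigenvalue with $\lambda<2|\mu\mu'|$ and eigenfunction $f\in L^2(\mathbb{R})$. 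Then
\begin{equation*}
\Phi(z)=g(z)-\lambda\geq 2|\mu\mu'|-\lambda>0
\end{equation*}
for all $z$, so $\Phi$ is smooth and strictly positive. Lemma \ref{keyODE} then forces $f\notin L^2(\mathbb{R})$, a contradiction. Consequently every eigenvalue of $\Delta_{\underline{\mu}}$ satisfies $\lambda\geq 2|\mu\mu'|$, and in particular so does the first one.

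It remains to justify the strictness $2|\mu\mu'|\neq 0$ asserted in the statement, i.e. that both components of $\underline{\mu}$ are nonzero. This is where the arithmetic of the lattice enters: $\Lambda'$ is invariant under (right multiplication by) the hyperbolic matrix $A=\mathrm{diag}(e^a,e^{-a})$, and if a nonzero $\underline{\mu}\in\Lambda'$ lay on a coordinate axis, its iterates $\underline{\mu}A^n$ would accumulate at the origin, contradicting discreteness of $\Lambda'$. Thus for $\underline{\mu}\neq 0$ in $\Lambda'$ we automatically have $\mu,\mu'\neq 0$, and the bound $2|\mu\mu'|$ is genuinely positive.

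I do not expect a serious obstacle here: the computation of $\min g$ is immediate from AM--GM, and the reduction to Lemma \ref{keyODE} only requires that an $L^2$ eigenfunction be a classical solution of the ODE on all of $\mathbb{R}$, which is automatic by elliptic regularity since the coefficient $g$ is smooth. The one point worth stating carefully, rather than a true difficulty, is the axis-avoidance property of $\Lambda'$ used to upgrade the nonnegative bound to the strict positivity $2|\mu\mu'|\neq 0$.
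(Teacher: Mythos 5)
Your proposal is correct and follows essentially the same route as the paper: the AM--GM bound on the potential $\mu^2 e^{-2z}+(\mu')^2e^{2z}$ followed by an appeal to Lemma \ref{keyODE} to exclude $L^2$ eigenfunctions below $2|\mu\mu'|$. Your extra justification that $\mu\mu'\neq 0$ (via invariance of $\Lambda'$ under the hyperbolic matrix and discreteness) is a correct supplement to the point the paper handles in the number-theoretic remark following the lemma, where $|\mu\mu'|$ is identified with the norm $N(\underline{\mu})$ and bounded below away from zero.
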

\begin{proof}
By \textsc{AM-GM}, the inequality
\begin{equation*}
\mu^2 e^{-2z}+(\mu')^2e^{2z}\geq 2|\mu\mu'|, 
\end{equation*}
holds, and the result follows from the previous lemma. 
\end{proof}
In terms of the number theoretic description in Remark \ref{NT}, the quantity $\mu\mu'$ is the \textit{norm} $N(\underline{\mu})$; the only basic property we will need is that there is $c>0$ such that $|\mu\mu'|\geq c$ for all $\underline{\mu}\in \Lambda'\setminus \{0\}$.
\\
\par
For completeness, let us conclude this section by discussing the zero mode $\underline{\mu}=0$. In this case, we study the ODE
\begin{equation*}
f_{zz}=-\lambda f
\end{equation*}
with $f$ periodic with period $a$. It has eigenvalues $\lambda=\frac{4\pi^2}{a^2} n^2$ for $n\in\mathbb{Z}$.
\vspace{0.3cm}
\section{The spectrum on coexact $1$-forms}\label{spectralbound}
In this section we will perform the key computation behind our main result. Recall from the previous section that on a $\Solv$-manifold there is a non-trivial family of metrics obtain by rescaling the lattice $\Lambda\subset \mathbb{R}^2$. With is in mind, we have the following.
\begin{prop}\label{maineigen}
Let $Y$ be a rational homology sphere equipped with a $\Solv$ metric such that the fibers are small enough. Then the first eigenvalue on coexact $1$-forms satisfies $\lambda_1^*=1$. Furthermore, the $1$-eigenspace is one dimensional.
\end{prop}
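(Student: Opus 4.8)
The plan is to lift the problem to the torus-bundle double cover $\tilde{Y}=\Gamma\setminus\Solv$, perform the eigenvalue computation there by Fourier-decomposing coexact $1$-forms exactly as for functions in Section~\ref{fourier}, and finally descend to $Y$ through the order-two isometry. I would trivialize $1$-forms using the left-invariant orthonormal coframe $e^1=e^z\,dx$, $e^2=e^{-z}\,dy$, $e^3=dz$, whose structure equations are $de^1=e^3\wedge e^1$, $de^2=e^2\wedge e^3$, $de^3=0$. On a rational homology sphere the Hodge Laplacian on coexact $1$-forms is the square of the self-adjoint curl operator $\ast d$ (which preserves coexactness), and in this coframe a direct computation gives
\[
\ast d\alpha=(E_2f_3-E_3f_2+f_2)\,e^1+(E_3f_1-E_1f_3+f_1)\,e^2+(E_1f_2-E_2f_1)\,e^3,
\]
for $\alpha=\sum_i f_i e^i$, where $E_1=e^{-z}\partial_x,\ E_2=e^z\partial_y,\ E_3=\partial_z$ is the dual frame. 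Expanding each $f_i$ into the Fourier modes $\underline{\mu}\in\Lambda'/V$ of Section~\ref{fourier} (so that $\partial_x\mapsto i\mu,\ \partial_y\mapsto i\mu'$), the operator $\ast d$ splits as $\bigoplus_{\underline{\mu}}(\ast d)_{\underline{\mu}}$, and I would analyze the bottom of the coexact spectrum mode by mode, in direct analogy with Lemmas~\ref{keyODE} and~\ref{eigenvalue}.

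First I would treat the zero mode $\underline{\mu}=0$, where the coefficients $f_i=f_i(z)$ are $a$-periodic. Coexactness makes $f_3$ constant, and since $e^3=dz$ is the unique-up-to-scale harmonic representative on $\tilde{Y}$, orthogonality to it forces $f_3\equiv0$. The eigenvalue equation $(\ast d)\alpha=\nu\alpha$ then reduces to $-f_2'+f_2=\nu f_1$ and $f_1'+f_1=\nu f_2$, which eliminate to $f_1''=(1-\nu^2)f_1$ with $a$-periodic boundary conditions. Periodicity excludes $\nu^2<1$, forces $f_1$ constant when $\nu^2=1$, and otherwise yields $\nu^2=1+(2\pi n/a)^2$. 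Hence the smallest coexact eigenvalue coming from the zero mode is exactly $\nu^2=1$, realized by the two forms $e^1\pm e^2$ (with $\ast d$-eigenvalue $\pm1$), while every other zero-mode eigenvalue is at least $1+(2\pi/a)^2$.

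For the nonzero modes $\underline{\mu}\neq0$ I would invoke the Weitzenböck formula $\Delta=\nabla^\ast\nabla+\mathrm{Ric}$ on $1$-forms. As $\mathrm{Ric}=\mathrm{diag}(0,0,-2)$ in the coframe, for coexact $\alpha$ one has $\langle\Delta\alpha,\alpha\rangle=\|\nabla\alpha\|^2-2\int_{\tilde{Y}}|f_3|^2$. On the mode $\underline{\mu}$ the frame derivatives $E_1,E_2$ become multiplication by $i\mu e^{-z}$ and $i\mu'e^{z}$, so the principal part of $\|\nabla\alpha\|^2$ dominates $\int(\mu^2e^{-2z}+(\mu')^2e^{2z})|\alpha|^2\geq 2|\mu\mu'|\,\|\alpha\|^2$ by \textsc{AM-GM}, precisely as in Lemma~\ref{eigenvalue}, while the connection terms together with $-2\int|f_3|^2$ contribute a correction bounded by $C\|\alpha\|^2$ with $C$ \emph{independent of} $\underline{\mu}$. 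This gives $\lambda_1^*(\Delta_{\underline{\mu}})\geq 2|N(\underline{\mu})|-C$. Establishing this uniform bound — controlling the full vector-valued system, in particular the single negative Ricci direction and the frame-twisting connection terms, by a mode-independent constant — is the step I expect to be the main obstacle. Granting it, shrinking the fibers rescales $\Lambda\to\varepsilon\Lambda$, hence $N(\underline{\mu})\to\varepsilon^{-2}N(\underline{\mu})$, so $\min_{\underline{\mu}\neq0}|N(\underline{\mu})|\to\infty$; thus for fibers small enough every nonzero mode contributes eigenvalues $>1$. Combined with the zero-mode analysis, this shows that on $\tilde{Y}$ the first coexact eigenvalue is exactly $1$, with two-dimensional eigenspace $\mathrm{span}(e^1+e^2,\,e^1-e^2)$.

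Finally I would descend to $Y=\tilde{Y}/\langle\psi\rangle$. Since $\psi$ is an isometry, coexact eigenforms on $Y$ are exactly the $\psi$-invariant coexact eigenforms on $\tilde{Y}$, and $b_1(Y)=0$ leaves no harmonic corrections. The involution reverses $z$ and interchanges the two fiber eigendirections, so on the coframe it acts by $e^1\leftrightarrow e^2$ (up to sign) and $e^3\mapsto-e^3$; consequently it fixes exactly one of $e^1\pm e^2$ and negates the other. Hence the $\psi$-invariant part of the two-dimensional $1$-eigenspace is one-dimensional, yielding $\lambda_1^*=1$ on $Y$ with a one-dimensional $1$-eigenspace. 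Since $-\inf_{p}\tilde{s}(p)/2=1$, this situates the metric exactly on the boundary of Theorem~\ref{spectral}, which is the source of the transversality subtleties mentioned in the introduction.
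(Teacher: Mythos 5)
Your route is structurally the same as the paper's: lift to the torus-bundle double cover, write the eigenvalue problem for $\ast d$ in the left-invariant coframe (your formula for $\ast d\alpha$ agrees exactly with the paper's system (\ref{eqn})), Fourier-decompose as in Section \ref{fourier}, treat the zero mode by the periodic ODE, and descend to $Y$ through the order-two isometry. Your zero-mode analysis (coexactness plus orthogonality to $dz$ kills $f_3$; the system eliminates to $f_1''=(1-\nu^2)f_1$, so the periodic spectrum is $\{1\}\cup\{1+(2\pi n/a)^2\}$ with the $\nu=\pm1$ eigenforms $\X\pm\Y$) and your descent argument (the involution sends $\X\leftrightarrow\Y$ up to sign and $\Z\mapsto-\Z$, so exactly a one-dimensional subspace of $\mathrm{span}(\X,\Y)$ survives) are correct and match the paper's, which phrases the surviving line as the combinations vanishing on $w$ at $z=0$.

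The one place you genuinely diverge is the nonzero modes, and that is precisely where you leave a gap: you invoke the Weitzenb\"ock formula $\Delta=\nabla^*\nabla+\mathrm{Ric}$ and then explicitly \emph{grant} the mode-independent bound $C$ on the connection terms and the negative Ricci direction. Since this estimate is the crux of the entire proposition, the proposal as written is not a complete proof. The step does not fail, however, and it can be filled along the lines you suspect: in the left-invariant frame the connection coefficients are the constants $0,\pm1$ (this is the same computation the paper performs before Lemma \ref{harmonic}, where the nonzero Christoffel symbols are $\Gamma_{212}=-\Gamma_{221}=1$ and $\Gamma_{313}=-\Gamma_{331}=-1$), so $\nabla\alpha$ is the matrix of frame derivatives $E_jf_i$ plus a zeroth-order term with constant coefficients; Peter--Paul gives $\|\nabla\alpha\|^2\geq\tfrac12\sum_{i,j}\|E_jf_i\|^2-C\|\alpha\|^2$ with $C$ absolute, and AM--GM gives $\sum_j\|E_jf_i\|^2\geq 2|\mu\mu'|\|f_i\|^2$, whence $\lambda\geq|\mu\mu'|-C-2$; shrinking the fibers then works because $\inf_{\underline{\mu}\neq0}|\mu\mu'|>0$, the number-theoretic fact the paper records after Lemma \ref{eigenvalue} (your claim that the minimum tends to infinity under rescaling silently uses this). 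The paper avoids the vector-valued Weitzenb\"ock formula altogether: it differentiates the first-order system (\ref{eqn}) into scalar equations for $f,g,h$ whose coupling terms involve only $h$ to first order, absorbs those cross terms by Peter--Paul into \emph{half} of the scalar potential $\mu^2e^{-2z}+(\mu')^2e^{2z}$, and applies the ODE comparison Lemma \ref{keyODE} to the resulting operator $\tilde{\Delta}_{\underline{\mu}}$, obtaining $|\mu\mu'|\leq\lambda^2+7$ and hence $\lambda^2>1$ once $|\mu\mu'|>8$. That computation is the concrete realization of exactly the estimate you postponed; with either filling, your argument becomes a complete proof equivalent to the paper's.
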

In fact, our proof will provide an explicit smallness condition for the fibers.
\\
\par
Let us start by considering the the case of a $\Solv$-manifold $\tilde{Y}=\Gamma\setminus \Solv$ with $b_1=1$. The $1$-forms
\begin{equation*}
\X=e^zdx,\quad \Y=e^{-z}dy\quad \Z=dz
\end{equation*}
descend to a left-invariant dual orthonormal frame on $\tilde{Y}$. We can then write any $1$-form $\xi$ as
\begin{equation*}
\xi=f\X+g\Y+h\Z,
\end{equation*}
where $f,g,h$ are functions on $\Gamma\setminus\Solv$, or equivalently left-invariant functions on $\Solv$. We are interested in understanding for which $\lambda$ the equation
\begin{equation*}
\ast d\xi=\lambda\xi
\end{equation*}
admits non-trivial solutions. Notice that, provided $\lambda\neq0$, such a form necessarily satisfies $d\ast\xi=0$, i.e. it is coclosed. We have\begin{align*}
d\xi&=(e^{-z}g_x-e^zf_y)\X\wedge \Y+\\
&+(-g_z+g+e^zh_y)\Y\wedge \Z+\\
&+(f_z+f-e^{-z}h_x)\Z\wedge \X
\end{align*}
so that our equation is equivalent to the system
\begin{align*}
-g_z+g+e^zh_y&=\lambda f\\
f_z+f-e^{-z}h_x&=\lambda g\\
e^{-z}g_x-e^zf_y&=\lambda h,\numberthis \label{eqn}
\end{align*}
while coclosedness is equivalent to
\begin{equation*}
e^{-z}f_x+e^zg_y+h_z=0.
\end{equation*}
Differentiating  we get
\begin{align*}
-e^{-2z}h_{xx}&=-e^{-z}f_{xz}-e^{-z}f_x+\lambda e^{-z}g_x\\
-e^{2z}h_{yy}&=-e^{z}g_{yz}+e^z g_y-\lambda e^{z}f_y\\
-h_{zz}&=e^{-z}f_{xz}-e^{-z}f_x+e^zg_{yz}+e^zg_y,
\end{align*}
therefore summing we obtain
\begin{equation*}
\Delta h=\lambda^2 h-2e^{-z}f_x+2e^zg_y,
\end{equation*}
where $\Delta$ denotes the Laplacian on functions on $\tilde{Y}$.
Similarly for $g$ we obtain
\begin{align*}
-e^{-2z}g_{xx}&=-\lambda e^{-z} h_x-f_{xy}\\
-e^{2z}g_{yy}&=f_{xy}+e^zh_{yz}\\
-g_{zz}&=\lambda f_z-g_z-e^zh_y-e^zh_{yz},
\end{align*}
hence summing
\begin{align*}
\Delta g&=\lambda(f_z-e^{-z}h_x)-g_z-e^zh_y=\lambda^2g-\lambda f-g_z-e^zh_y=\\
&=(\lambda^2-1)g-2e^z h_y.
\end{align*}
Finally, as
\begin{align*}
-e^{-2z}f_{xx}&=e^{-z}h_{xz}+g_{xy}\\
-e^{2z}f_{yy}&=\lambda e^z h_y-g_{xy}\\
-f_{zz}&=f_z-e^{-z}h_{xz}+e^{-z}h_x-\lambda g_z,
\end{align*}
we have
\begin{align*}
\Delta f&=\lambda(-g_z+e^zh_y)+f_z+e^{-z}h_x=\lambda^2 f-\lambda g+f_z+e^{-z}h_x=\\
&=(\lambda^2-1)f+2e^{-z}h_x.
\end{align*}
Notice that $\Z$ is a harmonic $1$-form; as $b_1=1$, all harmonic forms are multiples of it.
\begin{lemma}\label{cover}
Le $\tilde{Y}$ be a $\Solv$ manifold with $b_1=1$ equipped with a metric for which the fibers are small enough. Then $\lambda_1^*=1$, and the $1$-eigenspace is spanned by $X$ and $Y$.
\end{lemma}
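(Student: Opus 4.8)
The plan is to first produce enough low eigenforms to get $\lambda_1^*\le 1$, and then use the Fourier decomposition of Section \ref{fourier} to rule out everything strictly below level $1$ and to pin down the $1$-eigenspace. First I would observe that $\X$ and $\Y$ are coclosed (indeed $d\ast\X=d(\Y\wedge\Z)=0$, since $d\Z=0$ and $d\Y=\Y\wedge\Z$, and similarly for $\Y$) and pointwise orthogonal to the harmonic form $\Z$, hence coexact. A direct reading of the system \eqref{eqn} gives $\ast d\X=\Y$ and $\ast d\Y=\X$, so $\X\pm\Y$ are coexact eigenforms of $\ast d$ with eigenvalues $\pm1$; as $\Delta$ agrees with $(\ast d)^2$ on coexact $1$-forms, these are $\Delta$-eigenforms of eigenvalue $1$. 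This already shows $\lambda_1^*\le 1$ and that $\mathrm{span}\{\X,\Y\}$ lies in the $1$-eigenspace, so it remains to prove the reverse bound and that nothing new occurs at level $1$.

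To that end, let $\xi=f\X+g\Y+h\Z$ be a coexact eigenform, $\ast d\xi=\lambda\xi$, and expand $f,g,h$ in the Fourier modes. Because the coefficients of \eqref{eqn} depend only on $z$, the operator $\ast d$ preserves each mode $\underline\mu$, and the monodromy groups these into orbits under $V=\langle A\rangle$; since $|\mu\mu'|=N(\underline\mu)$ is $A$-invariant, I may assume $\xi$ is supported on a single orbit with a fixed value of $|\mu\mu'|$. On the zero orbit $\underline\mu=0$ the components depend only on $z$ and are $a$-periodic; the third equation of \eqref{eqn} forces $h=0$ (as $\lambda\neq0$), and eliminating $g$ from the first two yields $f_{zz}=(1-\lambda^2)f$. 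A nonzero $a$-periodic solution forces $1-\lambda^2\le 0$, with equality giving only constants, and tracing back one recovers exactly the multiples of $\X\pm\Y$. Thus the zero orbit produces no eigenvalue with $\lambda^2<1$ and precisely the two-dimensional space $\mathrm{span}\{\X,\Y\}$ at $\lambda^2=1$.

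The crux is the nonzero orbits, where the smallness of the fibers enters. Here I would pair the three derived identities $\Delta f=(\lambda^2-1)f+2e^{-z}h_x$, $\Delta g=(\lambda^2-1)g-2e^zh_y$ and $\Delta h=\lambda^2 h-2e^{-z}f_x+2e^zg_y$ with $f,g,h$ respectively and sum. Substituting $e^{-z}h_x=f_z+f-\lambda g$ and $e^zh_y=\lambda f+g_z-g$ from \eqref{eqn}, integrating by parts in $z$ (no boundary terms, since on a nonzero mode the components lie in the Schwartz space \eqref{schwartz}), and using $\int e^{-z}f_x\bar h=-\overline{\int e^{-z}h_x\bar f}$ together with its $y$-analogue, all the cross terms collapse to $4(\|f\|^2+\|g\|^2)-8\lambda\,\mathrm{Re}\langle f,g\rangle$. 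Applying Lemma \ref{eigenvalue} in the form $\langle\Delta u,u\rangle\ge 2|\mu\mu'|\,\|u\|^2$ on each component and estimating $8|\lambda|\,|\mathrm{Re}\langle f,g\rangle|\le 4|\lambda|(\|f\|^2+\|g\|^2)$ by Cauchy--Schwarz, the sum gives
\begin{equation*}
2|\mu\mu'|\,\|\xi\|^2\le(\lambda^2+4|\lambda|+3)\,\|\xi\|^2=(|\lambda|+1)(|\lambda|+3)\,\|\xi\|^2 .
\end{equation*}

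Hence $|\lambda|\le 1$ forces $|\mu\mu'|\le 4$. Choosing the fibers small enough that $\min_{\underline\mu\in\Lambda'\setminus\{0\}}|\mu\mu'|>4$ — which is achieved by rescaling $\Lambda$, since $N$ scales like the inverse square of the fiber size — then excludes any nonzero-orbit eigenform with $|\lambda|\le 1$. Combining the three cases yields $\lambda_1^*=1$ with $1$-eigenspace exactly $\mathrm{span}\{\X,\Y\}$. I expect the main obstacle to be precisely this last estimate: organizing the cross terms so that, after a crude Cauchy--Schwarz bound, the quantity $(|\lambda|+1)(|\lambda|+3)$ still loses to $2|\mu\mu'|$ for small fibers; this is also what makes the explicit threshold $|\mu\mu'|>4$ (equivalently a concrete bound on the fiber size) drop out.
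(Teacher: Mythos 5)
Your proof is correct, and the key claims check out: the identity for the cross terms after substituting $e^{-z}h_x=f_z+f-\lambda g$ and $e^zh_y=\lambda f+g_z-g$ from \eqref{eqn} and integrating by parts is right (the $z$-boundary terms vanish by the Schwartz decay, and $\mathrm{Re}\int f_z\bar f=\mathrm{Re}\int g_z\bar g=0$), as is the orbit-invariance of $|\mu\mu'|$ under $A$. The overall framework is the same as the paper's (Fourier decomposition over $\Lambda'/V$, elementary treatment of the zero mode, an integrated energy estimate against the lower bound of Lemma \ref{eigenvalue} on the nonzero modes), but your handling of the crux estimate genuinely differs. The paper works pointwise: it multiplies the three second-order identities by $\bar h,\bar g,\bar f$ and absorbs the cross terms $4\mathrm{Re}(i\mu'e^zg\bar h)$, $4\mathrm{Re}(i\mu e^{-z}f\bar h)$ by Peter--Paul, at the cost of halving the potential of $\Delta_{\underline{\mu}}$ on the $h$-component (so only the weaker bound $|\mu\mu'|$ survives there) and of inflating $\lambda^2-1$ to $\lambda^2+7$, which yields the threshold $|\mu\mu'|>8$. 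You instead eliminate $h$ (and all $\mu$-dependence) from the cross terms by feeding the first-order system back in before estimating, so a single Cauchy--Schwarz gives $2|\mu\mu'|\le(|\lambda|+1)(|\lambda|+3)$ and the sharper threshold $|\mu\mu'|>4$; the trade-off is that your argument needs the integration by parts and the decay of $\mathcal{S}$, while the paper's absorption is purely pointwise. Two smaller differences both in your favor: you make the upper bound $\lambda_1^*\le1$ explicit by checking $\ast d\X=\Y$, $\ast d\Y=\X$ (the paper leaves it implicit that $\lambda^2=1$ is attained on $\mathrm{span}\{\X,\Y\}$), and your zero-mode analysis through the first-order system (forcing $h=0$, then $f_{zz}=(1-\lambda^2)f$) is a bit cleaner than the paper's argument via the derived second-order equations.
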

\begin{proof}
We can expand $f,g$ and $h$ in Fourier series; the operator $\ast d$ decomposes accordingly in the sum of $\ast d_{\underline{\mu}}$, and in the $\underline{\mu}$ component our equations look like
\begin{align*}
\Delta_{\mu}h&=\lambda^2 h-2i\mu e^{-z} f+2i\mu'e^z g\\
\Delta_{\mu}g&=(\lambda^2-1)g-2i\mu'e^zh\\
\Delta_{\mu}f&=(\lambda^2-1)f+2i\mu e^{-z}h
\end{align*}
with $f,g$ and $h$ are \textit{complex} valued functions in the space $\mathcal{S}$.
\par
Let us discuss first the modes $\underline{\mu}\neq0$. By Lemma \ref{eigenvalue}, the bottom of the spectrum of $\Delta_{\underline{\mu}}$ is bounded below by $2|\mu\mu'|$; and furthermore, by suitably rescaling the metric, we can arrange that this quantity is $>16$ for all $\underline{\mu}\neq0$. Multiplying each equation by $\bar{h},\bar{g}$ and $\bar{f}$ respectively, and adding them together, we obtain the pointwise identity
\begin{equation*}
\bar{h}\Delta_{\mu}h+\bar{g}\Delta_{\mu}g+\bar{f}\Delta_{\mu}f= \lambda^2|h|^2+(\lambda^2-1)|g|^2+(\lambda^2-1)|f|^2+4\mathrm{Re}(i\mu'e^zg\bar{h})-4\mathrm{Re}(2i\mu e^{-z}f\bar{h}).
\end{equation*}
In particular, this implies that the left-hand side is real. By the Peter-Paul inequality, we have the pointwise inequalities
\begin{align*}
|4\mathrm{Re}(i\mu'e^zg\bar{h})|&\leq4|\mu'e^z\bar{h}||g|\leq \frac{(\mu')^2e^{2z}}{2}|h|^2+8|g|^2\\
|4\mathrm{Re}(i\mu e^{-z}f\bar{h})|&\leq4|\mu e^{-z}\bar{h}||f|\leq\frac{\mu^2e^{-2z}}{2}|h|^2+8|f|^2
\end{align*}
so that
\begin{equation}\label{ineq}
\bar{h}\tilde{\Delta}_{\mu}h+\bar{g}\Delta_{\mu}g+\bar{f}\Delta_{\mu}f\leq \lambda^2|h|^2+(\lambda^2+7)|g|^2+(\lambda^2+7)|f|^2
\end{equation}
where
\begin{equation*}
\tilde{\Delta}_{\mu}h=-h_{zz}+\frac{1}{2}(\mu^2 e^{-2z}+(\mu')^2e^{2z})h
\end{equation*}
is still a diagonalizable operator over $L^2(\mathbb{R})$. The same argument as Lemma \ref{eigenvalue} implies that the first eigenvalue of $\tilde{\Delta}_{\mu}$ is at least $|\mu\mu'|$. Therefore, by integrating the inequality (\ref{ineq}) we have
\begin{equation*}
|\mu\mu'|(\|h\|^2+\|g\|^2+\|f\|^2)\leq (\lambda^2+7)(\|h\|^2+\|g\|^2+\|f\|^2).
\end{equation*}
As by assumption $|\mu\mu'|>8$, $\lambda^2>1$.
\par
Finally, we deal with the zero mode. Suppose $0<\lambda^2< 1$. Then $\lambda^2-1<0$, hence
\begin{equation*}
-g_{zz}=(\lambda^2-1)g,\quad -f_{zz}=(\lambda^2-1)f
\end{equation*}
have no periodic solution. It follows from Equation (\ref{eqn}) that $h$ is constant, so we have a multiple of the harmonic form $\Z$. Finally, the case $\lambda^2=1$ corresponds to the span of $\X$ and $\Y$.
\end{proof}

Finally, we are ready to prove Proposition \ref{maineigen}.
\begin{proof}[Proof of Proposition \ref{maineigen}]
Suppose $Y$ is a $\Solv$-rational homology sphere. Consider its double cover $\pi:\tilde{Y}\rightarrow Y$ where $\tilde{Y}$ has $b_1(\tilde{Y})=1$. If $\xi$ is a $\lambda$-eigenform on $Y$, the $\pi^*\xi$ is a $\lambda$-eigenform on $\tilde{Y}$. Choose a $\Solv$-metric with fibers small enough, so that Lemma \ref{cover} applies. This implies that on $Y$ we have $\lambda_1^*\geq1$, and furthermore that if $\xi$ is a $1$-eigenform on $Y$, then $\pi^*\xi$ is a linear combination of $\X$ and $\Y$. Finally, in the notation of Section \ref{fourier}, if $v,w$ is the basis of $\Lambda$, then exactly the linear combinations of $\X$ and $\Y$ that vanish on $w$ at $z=0$ descend to $Y$.
\end{proof}

We will denote by $\eta$ the unique the unit length $1$-eigenforms such that $\eta(v)>0$ and $\eta$ descends to $Y$. Recall (Chapter $28$ of \cite{KM}) that there is a natural one-to-one correspondence between spin$^c$ structures unit length $1$-forms up to homotopy outside balls. With this in mind, we have the following.
\begin{lemma}\label{spinstr}
The unit length $1$-form $\eta$ determines a spin structure $\spin_0$ on $Y$.
\end{lemma}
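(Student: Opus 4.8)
The plan is to use the correspondence recalled just before the statement (Chapter $28$ of \cite{KM}): the unit length $1$-form $\eta$ determines a spin$^c$ structure $\spin_0$ whose associated oriented $2$-plane field is $\ker\eta$ and whose determinant line satisfies $c_1(\spin_0)=e(\ker\eta)\in H^2(Y;\mathbb{Z})$. Since conjugation of spin$^c$ structures corresponds to reversing the $1$-form ($\bar{\spin}_0\leftrightarrow-\eta$) and negates $c_1$, the structure $\spin_0$ is self-conjugate --- i.e.\ it is the spin structure claimed --- precisely when $c_1(\spin_0)$ is $2$-torsion. As $Y$ is a rational homology sphere, $H^2(Y;\mathbb{Z})$ is a finite group, so it suffices to show that $c_1(\spin_0)$ lies in its $2$-torsion subgroup, and I would obtain this by pulling back to the double cover.

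First I would pass to $\pi\colon\tilde{Y}\to Y$, the torus-bundle double cover with $b_1(\tilde{Y})=1$. By naturality $\pi^*c_1(\spin_0)=c_1(\pi^*\spin_0)=e(\ker\pi^*\eta)$, and $\pi^*\eta$ is the constant-coefficient combination of $\X$ and $\Y$ produced in the proof of Proposition \ref{maineigen}. Hence $\ker\pi^*\eta$ is spanned by the two globally defined, nowhere-vanishing left-invariant fields dual to $\Z$ and to the in-plane form orthogonal to $\pi^*\eta$; these trivialize $\ker\pi^*\eta$, so $e(\ker\pi^*\eta)=0$ and therefore $\pi^*c_1(\spin_0)=0$. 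To descend this vanishing I would invoke the transfer homomorphism, which satisfies $\pi_!\circ\pi^*=2$ on $H^2(Y;\mathbb{Z})$; combined with $\pi^*c_1(\spin_0)=0$ this gives $2\,c_1(\spin_0)=0$, so $c_1(\spin_0)$ is $2$-torsion and $\spin_0$ is self-conjugate, as required.

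The main subtlety --- and the reason $c_1(\spin_0)$ need not actually vanish --- is the equivariance of the trivialization above under the covering involution $\iota$. Since $\eta$ descends we have $\iota^*\eta=\eta$, so $\iota$ fixes $\eta^\sharp$; being orientation-preserving, it then acts on the oriented $2$-plane $\ker\eta$ with determinant $+1$, while $\iota\colon z\mapsto-z$ forces $\iota_*\partial_z=-\partial_z$ with $\partial_z\in\ker\eta$. Thus $\iota$ acts on $\ker\eta$ by $-\mathrm{id}$, the frame on $\tilde{Y}$ does \emph{not} descend, and $\ker\eta$ is the (generally nontrivial) complex line bundle $\tilde{Y}\times_{\mathbb{Z}/2}\mathbb{C}$ associated to the double cover. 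Carefully verifying this action is where the real work lies: one must use that the existence of the order-two isometry forces $v\perp w$ in the fiber metric, so that $\eta^\sharp$ lies along the $+1$-eigendirection of $\iota$ while the in-plane orthogonal direction lies along the $-1$-eigendirection.

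I expect this equivariance analysis to be the genuine obstacle, since it is precisely what distinguishes the borderline situation here from the trivial case: it is what pins down $c_1(\spin_0)$ as a $2$-torsion, rather than a trivial, class, and hence identifies $\spin_0$ as the self-conjugate spin$^c$ (spin) structure of the statement. An alternative, essentially equivalent, phrasing of the same argument would be to show directly that $\eta$ and $-\eta$ are homotopic as unit $1$-forms on $\tilde{Y}$ --- by rotating through the global $\X$--$\Y$ frame --- and then track how this homotopy interacts with $\iota$; the transfer argument above packages this bookkeeping more efficiently.
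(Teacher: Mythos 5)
Your reduction to self-conjugacy of $\spin_0$ starts the same way as the paper, but the criterion you then invoke is false, and the gap is fatal. On a closed oriented $3$-manifold, the spin$^c$ structures isomorphic to their conjugates are \emph{exactly} those induced by spin structures, and these are exactly those with $c_1=0$: the spin-induced ones have trivial determinant line, and conversely a trivialization of the determinant line produces a spin reduction; moreover both the set of spin-induced and the set of self-conjugate spin$^c$ structures are torsors over the $2$-torsion subgroup of $H^2(Y;\mathbb{Z})$ (because $\overline{\spin\otimes L}=\bar{\spin}\otimes L^{-1}$), so, one being contained in the other and both being nonempty and finite, they coincide. Self-conjugacy therefore requires $c_1(\spin_0)=0$ on the nose, not merely $2c_1(\spin_0)=0$. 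Since $c_1(\spin\otimes L)=c_1(\spin)+2c_1(L)$, the condition ``$c_1$ is $2$-torsion'' only confines $\spin_0$ to a torsor over the $4$-torsion subgroup, which is strictly larger whenever $H^2(Y;\mathbb{Z})\cong H_1(Y;\mathbb{Z})$ has elements of order $4$ --- entirely typical for these torus semibundles, where the class of the extra isometry squares to the class of the translation by $v$. Your transfer argument proves only $2c_1(\spin_0)=0$ and so cannot conclude. Worse, pulling back to $\tilde{Y}$ can never close this gap: $\ker\pi^*\subset H^2(Y;\mathbb{Z})$ contains precisely the Bockstein $\beta(\chi)$ of the class $\chi\in H^1(Y;\mathbb{Z}/2\mathbb{Z})$ of the double cover, and on a rational homology sphere $\beta$ is injective (as $H^1(Y;\mathbb{Z})=0$), so $\pi^*c_1(\spin_0)=0$ is perfectly consistent with $c_1(\spin_0)=\beta(\chi)\neq 0$.

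In fact your own third paragraph lands exactly on this class: if the covering involution acts by $-\mathrm{id}$ on the trivialized plane field $\ker\pi^*\eta$, then $\ker\eta\cong\tilde{Y}\times_{\mathbb{Z}/2}\mathbb{C}$ and $c_1(\spin_0)=e(\ker\eta)=\beta(\chi)$, which by the injectivity just noted is \emph{nonzero}; carried to its logical end, your analysis would refute the statement rather than prove it, so the proposal is internally inconsistent, not merely incomplete. The paper's proof takes a genuinely different route that never passes through $c_1$: it constructs a frame $\eta,\zeta',\Z'$ on $Y$ by twisting $\zeta$ and $\Z$ around $\eta$ along the $v$-direction (by a half-turn per translation by $v/2$), so that the twisted frame is invariant under the covering involution and descends, and then exhibits the homotopy $\cos(t)\eta+\sin(t)\zeta'$ from $\eta$ to $-\eta$ through unit $1$-forms on $Y$ itself --- self-conjugacy directly, with no cohomological intermediary. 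Note that the existence of such a global section $\zeta'$ of $\ker\eta$ over $Y$ is precisely the triviality that your $-\mathrm{id}$ computation denies, so the two analyses flatly contradict each other; adjudicating this (in particular, verifying that a twisting function can be chosen invariant under the full lattice $\Gamma$, Anosov monodromy included, and not just under the involution and the fiber translations) is the real mathematical content of the lemma, and no amount of transfer or $c_1$ bookkeeping can substitute for it.
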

\begin{proof}
Denote by $\zeta$ the unit length $1$-form obtained from $\eta$ by a couterclockwise rotation of $\pi/2$ within the fibers of $\tilde{Y}$. Then $\eta,\zeta$ and $\Z$ form a dual orthonormal frame of $\tilde{Y}$. By twisting $\zeta$ and $\Z$ around $\eta$ on $\mathbb{R}^2\times\{0\}$, so that under translation by $v/2$ they go into their opposite, and extending in a left-invariant fashion, we obtain a new frame $\eta,\zeta'$ and $\Z'$ of $\Solv$ that descends to $Y$. 
The spin$^c$ structure conjugate to $\spin_0$ corresponds to the $1$-form $-\eta$; but $\eta$ and $-\eta$ are homotopic on $Y$ through $\cos(t)\eta+\sin(t)\zeta'$ for $t\in[0,2\pi]$.
\end{proof}

\vspace{0.3cm}
\section{Transversality}\label{trans}
In the previous section, we have exhibited a metric for which $\lambda_1^*=-\mathrm{inf} (\tilde{s}/2)$. As this is the borderline case of Theorem \ref{spectral}, transversality is a quite delicate issue as small perturbation might introduce irreducible solutions. This should be compared with the discussion of flat manifolds in Chapter $37$ of \cite{KM}. As in their setting, we will show that we can achieve transversality, while still not having irreducible solutions, by considering the perturbed functional
\begin{equation*}
\mathcal{L}(B,\Psi)-\frac{\delta}{2}\|\Psi\|^2
\end{equation*}
for $\delta$ sufficiently small. The corresponding equations for the critical points are
\begin{align*}
D_B\Psi&=\delta\Psi\\
\frac{1}{2}\rho(F_{B^t})&=(\Psi\Psi^*)_0.
\end{align*}
We have the following.
\begin{lemma}
Consider a spin$^c$ structure $\spin\neq \spin_0$. Then, for $\delta$ small enough, the perturbed Seiberg-Witten equations do not admit irreducible solutions.
\end{lemma}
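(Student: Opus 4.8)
The plan is to argue by a compactness-and-blow-up contradiction that reduces the statement to the nonexistence of harmonic spinors for the flat Dirac operator in the spin$^c$ structures $\spin\neq\spin_0$. First I would record the a priori estimate: applying the maximum principle to the Weitzenböck formula $D_B^2=\nabla_B^*\nabla_B+s/4+\tfrac12\rho(F_{B^t})$ together with the perturbed equations $D_B\Psi=\delta\Psi$ and $\tfrac12\rho(F_{B^t})=(\Psi\Psi^*)_0$, and using $s\equiv-2$, yields the uniform pointwise bound $|\Psi|^2\leq 1+2\delta^2$. Suppose, for contradiction, that for a sequence $\delta_n\downarrow 0$ there are irreducible solutions $(B_n,\Psi_n)$ in a fixed $\spin\neq\spin_0$. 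By this bound and standard elliptic bootstrapping after gauge fixing (on a rational homology sphere the flat $\spin^c$ connection is unique up to gauge), a subsequence converges in $C^\infty$ to a solution $(B_\infty,\Psi_\infty)$ of the \emph{unperturbed} equations. Since we arranged $\lambda_1^*=1=-\inf_Y\tilde s/2$, Theorem \ref{spectral} forbids irreducible solutions of the unperturbed equations, so $\Psi_\infty=0$ and $B_\infty=B_0$ is flat.

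Thus $\Psi_n\to 0$, and I would blow up: set $c_n=\|\Psi_n\|_{L^\infty}$ and $\phi_n=\Psi_n/c_n$, so $\|\phi_n\|_{L^\infty}=1$. The curvature equation gives $\rho(F_{B_n^t})=2c_n(\phi_n\phi_n^*)_0=O(c_n)$, whence $B_n\to B_0$ in $C^\infty$, while $\phi_n$ solves $D_{B_n}\phi_n=\delta_n\phi_n$. Elliptic estimates together with the $L^\infty$-normalization produce a $C^\infty$ subsequential limit $\phi_\infty$ with $\|\phi_\infty\|_{L^\infty}=1$, hence $\phi_\infty\neq 0$, solving $D_{B_0}\phi_\infty=0$. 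So $\phi_\infty$ is a nonzero harmonic spinor for the flat Dirac operator of $\spin$.

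The heart of the argument is then to show that $D_{B_0}$ has no nonzero harmonic spinor when $\spin\neq\spin_0$, contradicting the previous step. I would compute $\ker D_{B_0}$ by the Fourier analysis of Section \ref{fourier}: lifting to the double cover $\tilde Y$ and block-diagonalizing $D_{B_0}$ along the Fourier modes $\underline\mu$, the nonzero modes carry no kernel because $D_{B_0}^2$ inherits the same \textsc{AM--GM} lower bound $\geq 2|\mu\mu'|>0$ used in Lemma \ref{eigenvalue} (this is exactly where the fiber-smallness rescaling is used), leaving only a zero-mode ODE computation. The zero mode sees $\spin$ through the holonomy of the flat connection $B_0$ and through the descent condition under the order-two isometry defining $Y$, and one checks its kernel is nontrivial precisely for $\spin_0$. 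Conceptually this matches Proposition \ref{maineigen}: to a harmonic spinor $\phi$ one can attach the imaginary-valued $1$-form $\omega_\phi(\cdot)=\langle\rho(\cdot)\phi,\phi\rangle$, which is coclosed (from $D_{B_0}\phi=0$) and hence coexact since $b_1(Y)=0$; the borderline Weitzenböck equality $\|\nabla_{B_0}\phi\|^2=\tfrac12\|\phi\|^2$ forced by $s\equiv-2$ makes $\omega_\phi$ a $1$-eigenform of $*d$, which by Proposition \ref{maineigen} lies in the one-dimensional span of $\eta$, i.e. in the class defining $\spin_0$.

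This contradiction proves the lemma. The main obstacle is the crux of the third paragraph: pinning down the zero-mode kernel $\spin^c$-by-$\spin^c$ and verifying that the limiting harmonic spinor is tied to $\spin_0$ itself rather than to a conjugate or twisted relative. This is the borderline phenomenon flagged after Proposition \ref{maineigen} and is the $\Solv$ analogue of the flat-manifold transversality discussion in Chapter $37$ of \cite{KM}; note that a purely inequality-based argument cannot succeed, since in the perturbed Weitzenböck identity the term $\delta^2\|\Psi\|^2$ lands on the favorable side, which is exactly why one is forced into this spectral input rather than a direct contradiction.
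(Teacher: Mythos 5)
Your compactness/blow-up reduction is sound and is essentially how the paper begins: for a sequence $\delta_i\to 0$ of irreducible solutions, the blow-up configurations converge (after gauge fixing and passing to a subsequence) to a nonzero harmonic spinor $\psi$ for the flat connection $B_0$, since the unperturbed equations admit no irreducibles by Theorem \ref{spectral}; the paper uses the $L^2$-normalized blow-up rather than your $L^\infty$ normalization, but that difference is immaterial. The genuine gap is in what you yourself flag as ``the heart of the argument'': ruling out such a limit when $\spin\neq\spin_0$. Your primary route --- block-diagonalizing $D_{B_0}$ over Fourier modes for each flat spin$^c$ connection and computing the zero-mode kernel spin$^c$-by-spin$^c$ --- is never executed, and as described it has two defects. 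First, $D_{B_0}^2=\nabla_{B_0}^*\nabla_{B_0}-\tfrac12$ does not literally ``inherit'' the scalar bound of Lemma \ref{eigenvalue}: $\nabla_{B_0}^*\nabla_{B_0}$ on spinors differs from $\Delta_{\underline{\mu}}$ by connection terms (compare the proof of Lemma \ref{harmonic}, which requires a genuine ODE argument rather than \textsc{AM-GM} applied to $D^2$), and for $\spin\neq\spin_0$ the flat holonomy shifts the Fourier frequencies to a coset of $\Lambda'$, so even the lower bound on $|\mu\mu'|$ needs a separate (torsion-holonomy) argument. Second, your expected dichotomy is backwards: the kernel of $D_{B_0}$ is trivial for $\spin_0$ as well --- that is precisely what Lemma \ref{harmonic} and the covering argument of Section \ref{trans} establish, and it is needed for transversality at the reducible --- so ``kernel nontrivial precisely for $\spin_0$'' cannot be the statement that closes the proof.

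The mechanism the paper actually uses is the one you relegate to a conceptual aside, and your one-sentence version of it omits every ingredient that makes it work. From $D_{B_0}\psi=0$, $B_0$ flat and $s\equiv -2$, the Weitzenb\"ock formula gives $\nabla_{B_0}^*\nabla_{B_0}\psi=\tfrac12\psi$, hence $\|\nabla_{B_0}\psi\|^2=\tfrac12\|\psi\|^2$; but this equality by itself says nothing about $\omega_\psi=\rho^{-1}(\psi\psi^*)_0$ being an eigenform. The paper needs, in addition: (a) the pointwise identity of \cite{LL}, $|\nabla\omega|^2+|d\omega|^2=|\psi|^2|\nabla_{B_0}\psi|^2$, stated there for solutions of the unperturbed equations and transported to the blow-up limit up to vanishing error; (b) coexactness of $\omega$, obtained in the paper as the limit of the coexact forms $\tfrac{1}{s_i^2}\tfrac12\rho(F_{B_i^t})$; (c) the integrated inequality $\int|\psi|^4\geq 2\int|\psi|^2|\nabla_{B_0}\psi|^2$, coming from multiplying the pointwise identity $\Delta|\psi|^2=|\psi|^2-2|\nabla_{B_0}\psi|^2$ by $|\psi|^2$ and integrating; and (d) the Bochner formula together with $\mathrm{Ric}\leq 0$ for $\Solv$ and $\lambda_1^*=1$. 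Only the resulting chain $2\|\omega\|^2\geq\|\nabla\omega\|^2+\|d\omega\|^2=2\|d\omega\|^2-\int\mathrm{Ric}(\omega,\omega)\geq 2\|d\omega\|^2\geq 2\|\omega\|^2$, with equality forced at every step, shows that $\omega$ is a $1$-eigenform, hence a multiple of $\eta$ by Proposition \ref{maineigen}, whence $\spin=\spin_0$ by Lemma \ref{spinstr}. Without (a)--(d), the assertion that the ``borderline Weitzenb\"ock equality makes $\omega_\phi$ a $1$-eigenform'' is a claim, not a proof; supplying them would turn your sketch into precisely the paper's argument.
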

\begin{proof}Suppose we have a sequence $\delta_i\rightarrow 0$ with corresponding irreducible solutions $(B_i,\Psi_i)$; consider the corresponding configurations in the blow-up $(B_i,s_i,\psi_i)$, where $\|\psi_i\|_{L^2}=1$. These admit (up to gauge transformations, and up to passing to a subsequence) a limit $(B,s,\psi)$ which solves the blown-up equations with $\delta=0$; in particular, as the unperturbed equations do not admit irreducible solutions by Theorem \ref{spectral}, $s=0$, $B$ is the flat connection, and $D_B\psi=0$. Recall that, setting $\xi=\rho^{-1}(\Psi\Psi^*)_0$, it is shown in \cite{LL} that for solutions $(B,\Psi)$ of the \textit{unperturbed} Seiberg-Witten equations the pointwise identity
\begin{equation*}
|\nabla\xi|^2+|d\xi|^2=|\Psi|^2|\nabla_B\Psi|^2
\end{equation*}
holds. This holds for the perturbed equations up to an error going to zero for $\delta_i\rightarrow 0$; hence it will apply to the limit form $\alpha=\rho^{-1}(\psi\psi^*)_0$. Furthermore, as it is the limit of the sequence of coexact forms $\frac{1}{s_i^2}\frac{1}{2}\rho(F_{B^t})$, $\alpha$ is a coexact $1$-form.
\par
Let us study the geometry of $\alpha$. As $\psi$ is a harmonic spinor, and $B$ is flat, the Weitzenb\"ock formula on $Y$ implies
\begin{equation*}
\nabla_B^*\nabla_B \psi=\frac{1}{2}\psi,
\end{equation*}
hence the pointwise identity
\begin{equation*}
\Delta|\psi|^2=2\langle \psi, \nabla_B^*\nabla_B \psi\rangle-2|\nabla_B\psi|^2=| \psi|^2-2|\nabla_B\psi|^2
\end{equation*}
holds. Multiplying by $|\psi|^2$ and integrating, we obtain
\begin{equation*}
\int | \psi|^4-\int  2| \psi|^2|\nabla_B\psi|^2=\int  | \psi|^2\Delta|\psi|^2\geq0.
\end{equation*}
Recalling now that $|\alpha|^2=\frac{1}{4}|\psi|^4$, we obtain, by using the Bochner formula and $\lambda_1^*=1$, the chain of inequalities
\begin{align*}
2\|\alpha\|^2_{L^2}=\int\frac{1}{2}|\psi|^4\geq \int  | \psi|^2|\nabla_B\psi|^2&=\|\nabla\alpha\|^2_{L^2}+\|d\alpha\|^2_{L^2}\\
 &=2\|d\alpha\|^2_{L^2}-\mathrm{Ric}(\alpha,\alpha)\geq2\|d\alpha\|^2_{L^2}\geq 2\|\alpha\|^2_{L^2}.
\end{align*}
This implies that all inequalities are equalities, so that in particular $\alpha$ is a $1$-eigenform, i.e. a multiple of $\eta$. Finally, by Lemma \ref{spinstr} this can happen if and only if the underlying spin$^c$ structure is the spin structure $\spin_0$.
\end{proof}

We need to understand more in detail the spin structure $\spin_0$ on $Y$; before doing this, let us study the spin geometry of the double cover $\tilde{Y}$. The manifold $\tilde{Y}=\Gamma\setminus\mathrm{Solv}$ comes with a natural spin structure $\spin_*$ coming from the left invariant orthonormal framing dual to $\Z,\X,\Y$, i.e.
\begin{equation*}
e_1=\frac{d}{dz},\quad e_2=e^{-z}\frac{d}{dx},\quad e_3=e^z\frac{d}{dy}.
\end{equation*}
This defines a spin structure $\spin_*$ by taking the trivial bundle $S=Y\times \mathbb{C}^2$ and letting these vector fields act via the Pauli matrices
\begin{equation*}
\begin{bmatrix}
i&0\\
0&-i
\end{bmatrix}
\quad
\begin{bmatrix}
0&-1\\
1&0
\end{bmatrix}
\quad
\begin{bmatrix}
0&i\\
i&0
\end{bmatrix}.
\end{equation*}
Let $B_*$ the spin connection on $Y$ induced by the Levi-Civita connection.

\begin{lemma}\label{harmonic}
The kernel of the Dirac operator $D_{B_*}$ consists of the constant spinors. 
\end{lemma}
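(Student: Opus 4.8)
The plan is to write the operator $D_{B_*}$ out explicitly in the left-invariant framing $e_1,e_2,e_3$ and then run the Fourier reduction of Section~\ref{spectralbound}, with the spectral estimate replaced by a convexity argument for the pointwise norm of the spinor.

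First I would record the Levi--Civita connection of the frame from the structure constants $[e_1,e_2]=-e_2$, $[e_1,e_3]=e_3$, $[e_2,e_3]=0$; the only nonzero covariant derivatives are $\nabla_{e_2}e_1=e_2$, $\nabla_{e_2}e_2=-e_1$, $\nabla_{e_3}e_1=-e_3$, $\nabla_{e_3}e_3=e_1$. Lifting to the spinor bundle with the given Pauli matrices $\rho_1,\rho_2,\rho_3$ yields $\nabla_{e_1}\psi=\psi_z$, $\nabla_{e_2}\psi=e^{-z}\psi_x-\tfrac12\rho_1\rho_2\psi$ and $\nabla_{e_3}\psi=e^{z}\psi_y+\tfrac12\rho_1\rho_3\psi$. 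Forming $D_{B_*}=\sum_i\rho_i\nabla_{e_i}$ and using the Clifford identities $\rho_2\rho_1\rho_2=\rho_1=\rho_3\rho_1\rho_3$, the two zeroth-order terms cancel (an avatar of the unimodularity of $\Solv$), leaving
\begin{equation*}
D_{B_*}\psi=\rho_1\psi_z+\rho_2\,e^{-z}\psi_x+\rho_3\,e^{z}\psi_y.
\end{equation*}
In particular every constant spinor lies in $\ker D_{B_*}$, and the content of the lemma is that there are no others.

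Next I would expand $\psi=(\psi_1,\psi_2)$ in Fourier modes exactly as in Section~\ref{fourier}. In the mode $\underline\mu=(\mu,\mu')$ the equation $D_{B_*}\psi=0$ becomes the first-order system $\psi_1'=p\,\psi_2$, $\psi_2'=\bar p\,\psi_1$, with $p=\mu e^{-z}-i\mu'e^{z}$ and $|p|^2=\mu^2e^{-2z}+(\mu')^2e^{2z}$. The mode $\underline\mu=0$ gives $p=0$, hence $\psi_1,\psi_2$ constant, recovering precisely the constant spinors. For $\underline\mu\neq0$ the components lie in the Schwartz space $\mathcal S$ of~\eqref{schwartz}, so $u:=|\psi_1|^2+|\psi_2|^2$ and its derivatives decay at $\pm\infty$. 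Setting $W=\bar\psi_1\psi_2$ one finds $W'=\bar p\,u$ and $p'=-\bar p$, whence $u'=4\,\mathrm{Re}(pW)$ and
\begin{equation*}
u''=4|p|^2u-4\,\mathrm{Re}(\bar p\,W)\geq 4|p|^2u-2|p|u=2|p|\bigl(2|p|-1\bigr)u,
\end{equation*}
where I used $|W|\le u/2$.

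To finish I would invoke the number-theoretic input of Section~\ref{fourier}: since $|\mu\mu'|\ge c>0$ on $\Lambda'\setminus\{0\}$ and $|p|^2\ge 2|\mu\mu'|$ by \textsc{AM--GM}, rescaling the lattice so the fibers are small forces $|p|\ge\tfrac12$ in every nonzero mode. Then $u''\ge0$, so $u$ is a nonnegative convex function on $\mathbb R$ tending to $0$ at both ends; exactly as in the proof of Lemma~\ref{keyODE} this is impossible unless $u\equiv0$, so $\psi$ vanishes in each nonzero mode. Thus $\ker D_{B_*}$ is spanned by the constant spinors. The main obstacle is the opening computation: one must carry out the spin-connection calculation carefully enough to be certain the zeroth-order terms cancel, since it is precisely this cancellation that makes the mode-by-mode operator a pure first-order system to which the convexity estimate applies; once the operator is in the displayed form, the nonzero modes are handled in the same spirit as Lemmas~\ref{keyODE} and~\ref{eigenvalue}.
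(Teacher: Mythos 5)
Your proposal is correct, and up to the Fourier reduction it follows the same path as the paper: both write out the left-invariant Dirac operator explicitly, observe that the zeroth-order terms cancel so that $D_{B_*}\psi=\rho_1\psi_z+e^{-z}\rho_2\psi_x+e^{z}\rho_3\psi_y$ (your intermediate signs for $\nabla_{e_2}\psi$ and $\nabla_{e_3}\psi$ are opposite to the standard convention the paper uses, but this is immaterial since $\rho_2\rho_1\rho_2=\rho_1=\rho_3\rho_1\rho_3$ makes the terms cancel under either sign), and both reduce mode-by-mode to the system $\psi_1'=p\,\psi_2$, $\psi_2'=\bar p\,\psi_1$ with $p=\mu e^{-z}-i\mu'e^{z}$, the zero mode giving exactly the constants. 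Where you genuinely diverge is in killing the nonzero modes. The paper argues algebraically: $|f|^2-|g|^2$ is constant hence zero by decay, so $f,g$ never vanish and $f/\bar g$ is constant; after normalizing $g=\bar f$ one gets a single real second-order ODE, and the substitution $A=e^{-z/2}a$ puts it in the form $A_{zz}=\Psi\cdot A$ with $\Psi>0$ (this is where small fibers enter), so Lemma \ref{keyODE} applies. You instead run a convexity argument directly on $u=|\psi_1|^2+|\psi_2|^2$: your identities $W'=\bar p\,u$, $p'=-\bar p$, and $u''=4|p|^2u-4\,\mathrm{Re}(\bar p\,W)$ are all correct, and with $|W|\le u/2$ and $|p|^2\ge 2|\mu\mu'|$ the small-fiber hypothesis gives $u''\ge 0$, so that a nonnegative convex function decaying at both ends must vanish identically. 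Your route is arguably cleaner: it treats the two spinor components symmetrically, avoids the paper's pointwise nonvanishing claim and the division by $\bar g$, and reduces the endgame to an elementary convexity fact rather than a second application of Lemma \ref{keyODE} to a transformed equation. What the paper's route buys is explicitness — a scalar ODE whose potential one can inspect — but both arguments rely on the same smallness input, and both thresholds (your $|\mu\mu'|\ge 1/8$, the paper's positivity of $\Psi$) are comfortably implied by the condition $|\mu\mu'|>8$ already imposed on the metric in Section \ref{spectralbound}, so your proof is valid in exactly the setting where the lemma is applied.
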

\begin{proof}
Let us write explicitly the Dirac operator. Our orthonormal frame satisfies the commutation relations
\begin{align*}
[e_1,e_2]&=-e_2\\
[e_1,e_3]&=e_3\\
[e_2,e_3]&=0.
\end{align*}
Setting $[e_i,e_j]=\sum_k C_{ijk} e_k$, we have that the Christoffel symbols are
\begin{equation*}
\Gamma_{ijk}=\frac{1}{2}(C_{ijk}-C_{ikj}-C_{jki}),
\end{equation*}
hence in our case the non-zero ones are
\begin{equation*}
\Gamma_{212}=-\Gamma_{221}=1,\quad \Gamma_{313}=-\Gamma_{331}=-1.
\end{equation*}
The spin connection on the spinor bundle is given by
\begin{equation*}
\nabla_{e_i}\Psi=e_i(\Psi)+\frac{1}{4}\sum_{j<k}\Gamma_{ijk}[\sigma_j,\sigma_k]\cdot \Psi,
\end{equation*}
see Section $3.3$ of \cite{BGV}. Therefore, as $[\sigma_1,\sigma_2]=-2\sigma_3$ and $[\sigma_1,\sigma_3]=2\sigma_2$, we have
\begin{align*}
\nabla_{e_1}\Psi&=e_1(\Psi)\\
\nabla_{e_2}\Psi&=e_2(\Psi)-\frac{1}{2}\sigma_3\cdot\Psi\\
\nabla_{e_3}\Psi&=e_3(\Psi)-\frac{1}{2}\sigma_2\cdot\Psi
\end{align*}
As $\sigma_2$ and $\sigma_3$ anticommute, we have
\begin{equation*}
D_{B_*}\Psi=\sum_i\rho(e_i)\cdot\nabla_{e_i}\Psi=\sum_i\rho(e_i)\cdot{e_i}(\Psi).
\end{equation*}
Hence, writing $\Psi=(f, g)$, we have
\begin{equation*}
D_{B_*}\begin{bmatrix}f\\g\end{bmatrix}=
\begin{bmatrix}
if_z-e^{-z}g_x+ie^zg_y\\
-ig_z+e^{-z}f_x+ie^zf_y
\end{bmatrix},
\end{equation*}
and the equations for a harmonic spinor are
\begin{align*}
f_z+ie^{-z}g_x+e^zg_y&=0\\
g_z+ie^{-z}f_x-e^zf_y&=0.
\end{align*}
Let us now decompose the equations according to the eigenmodes $\underline{\mu}\in\Lambda'$. We obtain
\begin{align*}
f_z-\mu e^{-z}g+i\bar{\mu}e^zg&=0\\
g_z-\mu e^{-z}f-i\bar{\mu}e^zf&=0.
\end{align*}
Of course for the zero mode the kernel consists of constant solutions. Let us show now that the eigenmodes with $\underline{\mu}\neq 0$ do not admit non-zero harmonic spinors. We have
\begin{align*}
\frac{d}{dz}|f|^2&=2\mathrm{Re}(f_z\bar{f})=2\mathrm{Re}((\mu e^{-z}g-i\bar{\mu}e^zg)\bar{f})\\
\frac{d}{dz}|g|^2&=2\mathrm{Re}(\bar{g}_z{g})=2\mathrm{Re}((\mu e^{-z}\bar{f}-i\bar{\mu}e^z\bar{f})g)
\end{align*}
hence
\begin{equation*}
\frac{d}{dz}(|f|^2-|g|^2)=0.
\end{equation*}
As $|f|^2-|g|^2$ is in the class of function $\mathcal{S}$ from equation (\ref{schwartz}), we have $|f|^2=|g|^2$ everywhere. This, together with our ODE, shows that the functions $f$ and $g$ are never zero. We then have
\begin{align*}
f_z\bar{g}&=\mu e^{-z}|g|^2-i\bar{\mu}e^{z}|g|^2\\
f\bar{g}_z&=\mu e^{-z}|f|^2-i\bar{\mu}e^{z}|f|^2
\end{align*}
hence
\begin{equation*}
\frac{d}{dz}(\frac{f}{\bar{g}})=\frac{f_z\bar{g}-f\bar{g}_z}{\bar{g}^2}=0.
\end{equation*}
Therefore, up to multiplying $f$ and $g$ by the same complex constant, we have 
\begin{equation*}
g=\bar{f}
\end{equation*}
and both are equations are equivalent to
\begin{equation*}
f_z=\mu e^{-z}\bar{f}+i\bar{\mu}e^{z}\bar{f}.
\end{equation*}
Writing $f=a+ib$ for real functions $a,b$, this can be written as the system
\begin{align*}
a_z&=\mu e^{-z}a+\bar{\mu}e^zb\\
b_z&=\bar{\mu}e^za-\mu e^{-z}b
\end{align*}
Differentiating the first equation, and making some simple substitutions, we obtain the equation
\begin{equation*}
a_{zz}=a_z+(\mu^2e^{-2z}+\bar{\mu}^2e^{2z}-2\mu e^{-z})a.
\end{equation*}
Then, $A=e^{-z/2}a$ (which still lies in $\mathcal{S}$) satisfies an equation of the form $A_{zz}=\Psi\cdot A$ where, for our choice of $\Solv$ metric, $\Psi>0$ everywhere. Again by Lemma \ref{keyODE}, $A$ is zero, and so are $a$ and $b$.
\end{proof}

With this computation in mind, we will show that the Dirac operator on our rational homology sphere $Y$ equipped with the spin structure $\spin_0$ has no kernel by suitably pulling back the spin structure along finite covers, and applying Lemma \ref{harmonic}.
\par
First of all, we pull it back to $\tilde{Y}$; suppose that this is the mapping torus of $A\in\mathrm{SL}(2;\mathbb{Z})$. Every element in $A\in\mathrm{SL}(2;\mathbb{Z}/2\mathbb{Z})$ has order $6$ so that $A^6=\mathrm{Id}$ modulo $2$; the mapping torus of $A^6$, call it $\overline{Y}$, admits a degree $6$ covering map $p:\overline{Y}\rightarrow\tilde{Y}$. The Mayer-Vietors sequence for the mapping torus of any map $f$ implies the exact sequence
\begin{equation*}
H_1(T^2;\mathbb{Z}/2\mathbb{Z})\stackrel{1-f_*}{\rightarrow} H_1(T^2;\mathbb{Z}/2\mathbb{Z})\rightarrow H_1(M_f;\mathbb{Z}/2\mathbb{Z})\rightarrow \mathbb{Z}/2\mathbb{Z}\rightarrow 0.
\end{equation*}
In our case, this implies that
\begin{equation*}
H^1(\overline{Y};\mathbb{Z}/2\mathbb{Z})\cong \mathbb{Z}/2\mathbb{Z}\oplus H^1(T^2;\mathbb{Z}/2\mathbb{Z})\cong (\mathbb{Z}/2\mathbb{Z})^3,
\end{equation*}
so that, from the point of view of spin topology, $\overline{Y}$ looks like the more familiar three-torus. From the description in Lemma \ref{spinstr}, it readily follows that the pullback of $\spin_0$ to $\overline{Y}$, call it $\overline{\spin}$, is the spin structure obtain from the standard one $s_*$ by twisting by $2\pi$ around the class dual to $v$ in $H^1(T^2;\mathbb{Z}/2\mathbb{Z})$ (which is a non-trivial operation). The sublattice of $\Lambda$ spanned by $2v$ and $w$ is preserved by $A^6$; the corresponding mapping torus $\overline{\overline{Y}}$ is a double cover of $\overline{Y}$; and the pullback of $\overline{s}$ is the standard spin structure $\spin_*$ on $\overline{\overline{Y}}$. One can then identify the harmonic spinors on $(\overline{Y},\overline{\spin})$ as the harmonic spinors on $(\overline{\overline{Y}},\spin_*)$ which change sign under translation by $v$; by Lemma \ref{harmonic}, there are no such spinors. Hence, there are no harmonic spinors on the base space $(Y,\spin_0)$.
\\
\par
Putting pieces together, we finally conclude.
\begin{proof}[Proof of Theorem \ref{main}]By the discussion above, we have found small perturbations for which there are no irreducible solutions and the (perturbed) Dirac operator of the reducible solution has no kernel; we can then add a further small perturbation to make all of its eigenvalues simple (while preserving these properties) as in Chapter $12$ of \cite{KM}; the proof of Theorem \ref{main} is then completed.
\end{proof}
\vspace{0.5cm}

\bibliographystyle{alpha}
\bibliography{biblio}

\begin{thebibliography}{KMOS07}

\bibitem[ADS83]{ADS}
M.~F. Atiyah, H.~Donnelly, and I.~M. Singer.
\newblock Eta invariants, signature defects of cusps, and values of
  {$L$}-functions.
\newblock {\em Ann. of Math. (2)}, 118(1):131--177, 1983.

\bibitem[APS75]{APS}
M.~F. Atiyah, V.~K. Patodi, and I.~M. Singer.
\newblock Spectral asymmetry and {R}iemannian geometry. {I}.
\newblock {\em Math. Proc. Cambridge Philos. Soc.}, 77:43--69, 1975.

\bibitem[Bal08]{Bal}
John~A. Baldwin.
\newblock Heegaard {F}loer homology and genus one, one-boundary component open
  books.
\newblock {\em J. Topol.}, 1(4):963--992, 2008.

\bibitem[BGV04]{BGV}
Nicole Berline, Ezra Getzler, and Mich\`ele Vergne.
\newblock {\em Heat kernels and {D}irac operators}.
\newblock Grundlehren Text Editions. Springer-Verlag, Berlin, 2004.
\newblock Corrected reprint of the 1992 original.

\bibitem[BGW13]{BCW}
Steven Boyer, Cameron~McA. Gordon, and Liam Watson.
\newblock On {L}-spaces and left-orderable fundamental groups.
\newblock {\em Math. Ann.}, 356(4):1213--1245, 2013.

\bibitem[Bre77]{Bre}
Jonathan Brezin.
\newblock {\em Harmonic analysis on compact solvmanifolds}.
\newblock Lecture Notes in Mathematics, Vol. 602. Springer-Verlag, Berlin-New
  York, 1977.

\bibitem[CGH12]{CGH1}
Vincent Colin, Paolo Ghiggini, and Ko~Honda.
\newblock The equivalence of {H}eegaard {F}loer homology and embedded contact
  homology via open book decompositions {I}.
\newblock {\em preprint}, arXiv:math/1208.1074, 2012.

\bibitem[Hir73]{Hir}
Friedrich E.~P. Hirzebruch.
\newblock Hilbert modular surfaces.
\newblock {\em Enseignement Math. (2)}, 19:183--281, 1973.

\bibitem[KLT11]{HFHM1}
Cagatay Kutluhan, Yi-Jen Lee, and Clifford Taubes.
\newblock {HF}={HM} {I} : {H}eegaard {F}loer homology and {S}eiberg--{W}itten
  {F}loer homology.
\newblock {\em preprint}, arXiv:math/1007.1979, 2011.

\bibitem[KM07]{KM}
Peter Kronheimer and Tomasz Mrowka.
\newblock {\em Monopoles and three-manifolds}, volume~10 of {\em New
  Mathematical Monographs}.
\newblock Cambridge University Press, Cambridge, 2007.

\bibitem[KMOS07]{KMOS}
P.~Kronheimer, T.~Mrowka, P.~Ozsv\'{a}th, and Z.~Szab\'{o}.
\newblock Monopoles and lens space surgeries.
\newblock {\em Ann. of Math. (2)}, 165(2):457--546, 2007.

\bibitem[Lin16]{Lin3}
Francesco Lin.
\newblock Lectures on monopole {F}loer homology.
\newblock In {\em Proceedings of the {G}\"okova {G}eometry-{T}opology
  {C}onference 2015}, pages 39--80. G\"okova Geometry/Topology Conference
  (GGT), G\"okova, 2016.

\bibitem[Lin18]{Lin4}
Francesco Lin.
\newblock Monopole floer homology and the spectral geometry of three-manifolds.
\newblock {\em to appear in Communications in Analysis and Geometry}, 2018.

\bibitem[LL18]{LL}
Francesco Lin and Michael Lipnowski.
\newblock The seiberg-witten equations and the length spectrum of hyperbolic
  three-manifolds.
\newblock {\em preprint}, arXiv:math/1810.06346, 2018.

\bibitem[Mar16]{Mar}
Bruno Martelli.
\newblock An introduction to geometric topology.
\newblock arXiv:math/1610.02592, 2016.

\bibitem[MOY97]{MOY}
Tomasz Mrowka, Peter Ozsv\'{a}th, and Baozhen Yu.
\newblock Seiberg-{W}itten monopoles on {S}eifert fibered spaces.
\newblock {\em Comm. Anal. Geom.}, 5(4):685--791, 1997.

\bibitem[RR17]{RR}
Jacob Rasmussen and Sarah~Dean Rasmussen.
\newblock Floer simple manifolds and {L}-space intervals.
\newblock {\em Adv. Math.}, 322:738--805, 2017.

\bibitem[Sco83]{Sco}
Peter Scott.
\newblock The geometries of {$3$}-manifolds.
\newblock {\em Bull. London Math. Soc.}, 15(5):401--487, 1983.

\end{thebibliography}

\end{document}